\documentclass[11pt, a4paper]{article}
\usepackage{courier}
\usepackage{blindtext} 
\usepackage{amsmath}
\usepackage{amssymb}
\usepackage[colorlinks=true,breaklinks]{hyperref}
\usepackage[hyphenbreaks]{breakurl}
\usepackage{xcolor}
\definecolor{c1}{rgb}{0,0,1}
\definecolor{c2}{rgb}{0,0.3,0.9}
\definecolor{c3}{rgb}{0.3,0.9}
\hypersetup{linkcolor={c1},citecolor={c2},urlcolor={c3}}
\usepackage{enumerate}
\usepackage{todonotes}
\usepackage{makeidx}
\makeindex
\usepackage[top=3.2cm,
bottom=3.5cm,
left=2.8cm,right=2.8cm]{geometry}
\usepackage{fancyhdr}

\def\XXint#1#2#3{{\setbox0=\hbox{$#1{#2#3}{\int}$ }
\vcenter{\hbox{$#2#3$ }}\kern-.6\wd0}}

\usepackage{amsthm}
\theoremstyle{plain}
\newtheorem{theorem}{Theorem}[section]

\theoremstyle{definition}

\theoremstyle{lemma}
\newtheorem{lemma}[theorem]{Lemma}

\theoremstyle{Remark}

\theoremstyle{proposition}

\theoremstyle{corollary}

\theoremstyle{example}

\theoremstyle{assumption}

\usepackage{systeme}
\usepackage{colonequals}
\usepackage{comment}
\usepackage{cite}
\begin{document}
\pagestyle{empty}

\title{Convergence to global equilibrium for the semiconductor Boltzmann equation}

\author{Gayrat Toshpulatov\thanks{Institut f\"ur Analysis und Numerik, Fachbereich Mathematik
und Informatik der Universit\"at M\"unster, Orl\'eans-Ring 10, 48149 M\"unster, Germany, {\tt gayrat.toshpulatov@uni-muenster.de}}}
\maketitle

\pagestyle{plain}
\begin{abstract}
The Boltzmann equation describing the transport of electrons in  semiconductor devices with an external electrostatic potential is considered  when the spatial variable is in a torus and the wave vector is in the Brillouin zone. We prove the exponential time decay of solutions towards the global equilibrium  in a weighted $L^2$ space. Our result holds for wide classes of energy functions of electrons and external electrostatic potentials, and
the estimates on the rate of convergence are explicit and constructive. We remove  the close-to-equilibrium assumption on the initial datum and the parabolic band approximation assumption that were required in previous works. The technique is based on the construction of a suitable highly non-linear Lyapunov
functional by modifying the relative entropy.  The analysis benefits from uniform bounds for the solution in terms of the global equilibrium. 
\end{abstract}
\textcolor{black}{\begin{small}\textbf{Keywords:} Semi-classical kinetic equations, Boltzmann equation for semiconductors,  convergence to equilibrium, Fermi-Dirac distribution, hypocoercivity, Lyapunov functional.\\
\textbf{2020 Mathematics Subject Classification:} 35Q20, 35B40, 35Q81, 35Q82, 82C40.
\end{small}}
\tableofcontents
\section{Introduction}
In semiconductor devices, electrical currents originate from the transport of electrons and holes. In semi-classical kinetic description, the statistical evolution of electrons  can be described by 
the following spatially inhomogeneous Boltzmann equation \cite{Book1, Book2, Book3}
\begin{equation*}
\begin{cases}
\displaystyle \partial_t f(t,x,k)+\frac{1}{\hbar}\nabla_{k}{\mathcal{E}}(k)\cdot \nabla_x f(t,x,k)-\frac{q}{\hbar}\nabla_x V(x)\cdot \nabla_k f(t,x,k)=Q(f)(t,x,k)\\
\displaystyle f(0,x,k)=f_{0}(x,k).
\end{cases}
\end{equation*}
Here the variables $t\geq 0$ and  $x \in \mathbb{T}^d$   stand for time and  position, respectively. In practice, $d$ is actually equal to two or three, however, in this paper,  $d$ can be any natural number.   The variable $k$ stands for wave vector and it belongs to the \emph{Brillouin zone} $B\subset \mathbb{R}^d$ associated with the underlying crystal lattice. The Brillouin zone $B$ is the elementary cell of the dual lattice and it will be identified with the  $d$-dimensional torus corresponding to the periodicity of the lattice. We have the property of $B$ that $k\in B$ implies $-k\in B.$ The constants $\hbar$ and $q$ are  respectively the Planck constant and  the positive elementary charge. The function $\mathcal{E}(k)$   describes the energy of electrons. The expression of $\mathcal{E}(k)$ is determined by acceptable solutions of the Schr\"odinger equation for the actual crystal potential, which varies markedly from solid to another. The function $V(x)$   describes a given external electrostatic potential.  In general, the electrostatic potential is  deduced from the distribution function through the Coulomb interaction, but for sake of simplicity, we shall assume here that it is given and does not depend on time $t.$  The unknown $f(t,x,k)\geq 0$ describes the phase space distribution function of particles. The operator $Q$  describes the interactions with the particles and the device, and it can be written as 
\begin{align*}
Q(f)&=\int_B\big( s(x,k',k)f'(1-f)-s(x,k,k')f(1-f')\big)dk',
\end{align*}
where $f=f(t,x,k)$ and $f'=f(t,x,k').$ The function $s(x,k',k)$ is non-negative and presents the probability for a particle to change its wave vector $k$ into another $k'$ during interaction at the position $x$. We assume that the \emph{principle of detailed balance} holds, according to which 
$$s(x,k,k')e^{\frac{\mathcal{E}(k')}{K_BT}}  =s(x,k',k) e^{\frac{\mathcal{E}(k)}{K_BT}},$$
 where $K_B$ is the Boltzmann constant and $T$ is the temperature of crystal. This relation is a sufficient condition for the  Fermi-Dirac distributions  to be in the null space of $Q$.    We denote $$\sigma(x,k,k')\colonequals s(x,k,k')e^{\frac{\mathcal{E}(k')}{K_BT}}.$$ Then the principle of detailed balance condition provides that $\sigma(x,k,k')=\sigma(x,k',k)$. 

 The wave
vector $k$ is usually taken in a bounded set, the Brillouin zone $B$, which is  the most physically
relevant case.  However,  when the particles under consideration have energies close to an extremum of an energy band, then the \emph{parabolic band approximation}  \cite{Book1, Book2} is used, which consists in taking
\begin{equation*}
\mathcal{E}(k)=\frac{\hbar}{2m}|k|^2\, \,\text{ and } \, \, B=\mathbb{R}^d,
\end{equation*}
where $m$ is the effective mass of the particle.
 To include the parabolic band approximation case, in this paper,  $B$ is a bounded set (which is identified with the torus corresponding to the periodicity of the lattice)  or $\mathbb{R}^d.$ If $B=\mathbb{R}^d$, then $\mathcal{E}(k)$ is a quadratic polynomial as above.  

For simplicity, we set all physical constants to unity: $\hbar=q=K_B=T=m=1.$ This is not a restriction since one can check that our results hold without this condition. Therefore, we shall consider the normalized equation
\begin{equation}\label{Eq}
\begin{cases}
\displaystyle \partial_t f(t,x,k)+\nabla_{k}{\mathcal{E}}(k)\cdot \nabla_x f(t,x,k)-\nabla_x V(x)\cdot \nabla_k f(t,x,k)=Q(f)(t,x,k)\\
\displaystyle f(0,x,k)=f_{0}(x,k).
\end{cases}
\end{equation}
with \begin{equation*}
Q(f)=\int_{B}\sigma(x,k,k')\big(e^{-\mathcal{E}(k)}(1-f)f'-e^{-\mathcal{E}(k')}(1-f')f\big) dk'.
\end{equation*} 
 
Equation \eqref{Eq} has several properties following standard physical considerations. 
  The factors $(1-f)$ and $(1-f')$ appearing in $Q$ take into account the Pauli exclusion principle. The distribution function is supposed to satisfy  
\begin{equation*}
0\leq f\leq 1.
\end{equation*}

Whenever $f(t,x,k)$ is a (well-behaved) solution of \eqref{Eq}, one has  \textit{global conservation of mass} 
\begin{equation*}
\int_{\mathbb{T}^3}\int_Bf(t,x,k)dkdx=\int_{\mathbb{T}^3}\int_B f_0(x,k)dkdx, \, \, \, \, \forall\, t\geq 0.
\end{equation*}

Equation \eqref{Eq} is \emph{dissipative} in the sense that the following entropy functional  decreases under the time-evolution of the solution $f$:   For functions $f$ satisfying $0< f<1$ ($f$ is not necessarily the solution), we define  the \emph{entropy functional}  
\begin{equation}\label{Hch}\mathrm{H}(f)\colonequals \int_{\mathbb{T}^d}\int_{B}\big(f\log{f}+(1-f)\log{(1-f)+f(\mathcal{E}(k)+V(x))}\big)dkdx.
\end{equation} 
If we have the existence of a solution $f\in (0,1)$ which is regular enough, one can check (see \eqref{dtH} below), for all $t>0,$
\begin{align}\label{dt H2}
\frac{d}{dt}\mathrm{H}(f(t))=-\frac{1}{2}\int_{\mathbb{T}^d}\int_{B}\int_{B}\sigma (x,k,k') e^{-\mathcal{E}-\mathcal{E}'-V}(1-f)(1-f')(F-F')(\log{F}-\log{F'})dk'dkdx,
\end{align}
where  $\mathcal{E}=\mathcal{E}(k),$ $\mathcal{E}'=\mathcal{E}(k')$, $ F\colonequals \frac{f(t,x,k)}{e^{-\mathcal{E}(k)-V(x)}(1-f(t,x,k))},$ and $ F'\colonequals \frac{f(t,x,k')}{e^{-\mathcal{E}(k')-V(x)}(1-f(t,x,k'))}.$  
Since the function $ s\mapsto  \log{s}$ is increasing  for $s>0$, we have $$(F-F')(\log{F}-\log{F'})\geq 0,$$ hence
$$\frac{d}{dt}\mathrm{H}(f(t))\leq 0, \, \, \, \forall\, t>0.$$
 This means that   $\mathrm{H}(f(t))$ decreases under the time-evolution of the solution $f$. Moreover, the entropy dissipation functional  (the integral on the right hand side of \eqref{dt H2})  vanishes only if $F=F'$. This means that $F$ does not depend on $k$ and so 
\begin{equation}\label{loc}
f(t,x,k)=\frac{1}{1+e^{\mathcal{E}(k)+V(x)-\mu(t,x)}}
\end{equation}
for some function $\mu(t,x).$ Hence, any function  $f$ in the form of \eqref{loc} is a \emph{local equilibrium} for \eqref{Eq}. For such $f $ both left and right hand sides of \eqref{Eq} vanish if $\mu$ is constant. Hence, the unique  \emph{global equilibrium} for \eqref{Eq} is 
\begin{equation}\label{eqilib}
f_{\infty}(x,k)=\frac{1}{1+e^{\mathcal{E}(k)+V(x)-\mu_{\infty}}},
\end{equation}
where $\mu_{\infty}>0$ is determined by mass conservation $$\displaystyle \int_{\mathbb{T}^d}\int_Bf_{0}(x,k)dkdx=\int_{\mathbb{T}^d}\int_{ B}f_{\infty}(x,k)dkdx.$$  The function $f_{\infty}(x,k)$ is called the  Fermi-Dirac distribution.

On the basis of the decay of $\mathrm{H}(f(t)),$ one can conjecture that  $f(t)$ converges to the global equilibrium  $f_{\infty}$ as $t\to \infty.$   Thus, it is a fundamental problem to prove (or disprove) this convergence and to estimate the convergence rate. Our goal is to  prove this convergence and to derive explicit and constructive estimates for convergence rate. 

There are many works concerning the semiconductor Boltzmann equation \eqref{Eq}.  We refer the books \cite{Book1, Book2, Book3} for the physical background of the equation  and its applications. The existence of weak solutions to the semiconductor Boltzmann equation was proved by Poupaud \cite{Pop.}. 
 In the case of the parabolic band approximation (i.e., $\mathcal{E}(k)=|k|^2$ and $B=\mathbb{R}^d$), the  existence  of smooth global-in-time solutions  was proven in \cite{Mus, Exist2, P.S, Exist1, Exist3}. In all these works, the interaction rate  $s(x,k,k')$ was assumed to be smooth. Majorana et al.\,\cite{Maj1,  Maj2, Maj3} proved the existence of solutions of the spatially homogeneous  equation allowing for non-smooth interaction rates.

 The diffusion limit of the semiconductor Boltzmann equation has been studied by several authors. The asymptotics leads to the so-called energy transport models. We refer \cite{Asym3, Des, Asym5,   G.P,  Asym2, Jun,  Asym4, Asym6, Asym1, P.S} for the derivation and the  mathematical analysis of these models.

Proving decay of solutions to equilibrium  is one of the fundamental problems for kinetic equations.  For spatially homogeneous kinetic equations it is well-known that explicit estimates on convergence to equilibrium can be obtained by the direct study of the relative entropy and its  dissipation functional. With the help of so-called Sobolev inequalities one can obtain a Gr\"onwall inequality for the relative entropy, which implies convergence of the solution to the global equilibrium. Unfortunately,  this idea can not be used for spatially inhomogeneous kinetic equations. The reason is that the collision operator acts only on the velocity variable, which causes to loose information about $x-$direction in the dissipation functional. To deal with this problem, in  recent years, many new methods, so called hypocoercivity methods, are introduced to study the long-time behavior of spatially inhomogeneous kinetic equations.
The challenge of hypocoercivity is to understand the interplay between the collision operator that provides dissipativity in the velocity variable and the transport one which is conservative, in order to obtain global dissipativity for the whole problem. There are several hypocoercivity methods: see \cite{D.V, V, Neu.Schmeis} for the entropy-entropy dissipation method,  \cite{NeuMou, Herau1, V, Achleitner2015,  AT1, AT2} for the $H^1$-hypocoercivity method, \cite{Herau, DMS, Max, BHR, Car, APT, Tosh} for the $L^2$-hypocoercivity method, and  \cite{Guo, Ebe, Can,  Cao} for some other methods.
 
Concerning the long time behavior of the semiconductor Boltzmann equation \eqref{Eq}, there are  results only in the case of the parabolic band approximation (i.e., $\mathcal{E}(k)=|k|^2$ and $B=\mathbb{R}^d$) and without the electrostatic potential (i.e., $V(x)$ is identically zero): Neumann and Schmeiser \cite{Neu.Schmeis} obtained   a polynomial rate of convergence to equilibrium. Their proof is based on the entropy-entropy dissipation method  and uniform regularity bounds on the solution. Using the $H^1$-hypocoercivity method and assuming that the initial data is close to equilibrium (i.e., $f_0-f_{\infty}$ is sufficiently small) in a Sobolev space, an exponential decay result was obtained in \cite{NeuMou}. Recently, Chen,  Liu, and Wan \cite{Chen} considered the semiconductor Boltzmann equation with the diffusive reflection boundary condition. They  proved the exponential convergence of the solution to the global equilibrium in $L^{\infty}$ when the initial   data is close to equilibrium.
 Pirner and the author \cite{P.T} proved that when initial data is bounded in terms of the equilibrium, then the solution converges to the equilibrium exponentially.

As mentioned above, the long time behavior of solutions has been studied in the case of the parabolic band approximation  and without the electrostatic potential. In this paper, we shall improve these previous results. We consider the most physically relevant case by letting the wave
vector $k$ be in a bounded set, the Brillouin zone $B$.  The energy function $\mathcal{E}(k)$ can be non-quadratic, and  we only require that  the matrix $\int_B \frac{\partial^2\mathcal{E}(k)}{\partial k^2}e^{-\mathcal{E}(k)} dk$ is positive definite. We assume the electrostatic potential $V$ is a given smooth function.  We do not require the close-to-equilibrium assumption on the initial data.  We prove exponential decay of the solution  to the global equilibrium.  Our results also include  the parabolic band approximation case.
 The proof is based on the construction of  a suitable  Lyapunov functional for \eqref{Eq}.  This functional has the following form
\begin{align}\label{E(f)}
\mathrm{E}(f(t))\colonequals \int_{\mathbb{T}^d}\int_B& \left( f(t,x,k)\log\frac{f(t,x,k)}{f_{\infty}(x,k)}+(1-f(t,x,k))\log\frac{1-f(t,x,k)}{1-f_{\infty}(x,k)}\right)dkdx\nonumber \\
&+\delta \int_{\mathbb{T}^d}\int_B \nabla_x \phi(t,x) \cdot \nabla_k \mathcal{E}(k)\log\left(\frac{f(t,x,k)}{1-f(t,x,k)}\right) e^{-\mathcal{E}(k)}dk dx,
\end{align}
 where $\delta>0$  is a small constant and  the function $\phi $ is the solution of the elliptic equation 
 \begin{equation*}
-\mathrm{div}_x(A\nabla_x \phi(t,x))=\int_B \big(f(t,x,k)-f_{\infty}(x,k)\big)dk,\, \, \, \int_{\mathbb{T}^d} \phi(t,x) dx=0, \, \, \, x\in \mathbb{T}^d, \, \, t\geq 0
\end{equation*}
with $$A\colonequals \int_B \frac{\partial^2\mathcal{E}(k)}{\partial k^2}e^{-\mathcal{E}(k)} dk\in \mathbb{R}^{d\times d}.$$ One can check that the first term on the right hand side of \eqref{E(f)}  is equal to the \emph{relative entropy} $$\mathrm{H}(f|f_{\infty})\colonequals \mathrm{H}(f)-\mathrm{H}(f_{\infty}).$$ Hence, one can consider $\mathrm{E}(f)$ as a modification of the relative entropy. The second term in \eqref{E(f)} is chosen so that,   if the initial data is bounded in terms of the equilibrium (see \eqref{b1} below), then this functional is equivalent to the square of the weighted $L^2$-norm
$$\int_{\mathbb{T}^d}\int_B\big(f(t,x,k)-f_{\infty}(x,k)\big)^2e^{\mathcal{E}(k)}dxdk$$
and it satisfies 
$$\frac{d}{dt } \mathrm{E}(f(t))\leq -2\lambda  \mathrm{E}(f(t)),\,\,\, \forall\,t>0$$
for some $\lambda>0$ (see the proof of Theorem \ref{main th}).  This inequality lets us conclude that the solution $f(t,x,v)$ converges to the equilibrium $f_{\infty}(x,k)$ exponentially in the above $L^2$-norm.  

Compared to previous hypercoercivity methods,  the Lyapunov functional $\mathrm{E}(f)$ seems new and can be useful to study long time behavior of other semi-classical kinetic equations.  
The results of this work provide an extension to the results of \cite{ NeuMou, Neu.Schmeis, Chen, P.T} because  we do not require  the parabolic band approximation and close-to-equilibrium assumptions,  and the electrostatic potential does not have to be identically zero.


The organization of this paper is as follows. In Section 2, we state the main results. In Section 3, we establish existence of a unique distributional solution. In Sections 4 and 5, we study the relative entropy and  its dissipation, and prove some estimates involving a non-linear  projection operator. In section 6, we  construct a Lyapunov functional and present the proof of our main result.

\section{Main results}
 Our main result is the following:
\begin{theorem}\label{main th}
Let  $\mathcal{E}:B\to \mathbb{R},$  $V:\mathbb{T}^d\to \mathbb{R}$, and $\sigma :\mathbb{T}^d\times B\times B\to \mathbb{R}$ be smooth functions.
Assume that there exist constants    $\sigma_{+}>0,$ $\mu_{1}>0,$ and  $ \mu_2>0$ such that 
\begin{equation*}
   0 \leq \sigma(x,k,k')\leq \sigma_{+}, \, \, \forall\, x\in \mathbb{T}^d,\,\,\, \forall\, k,k'\in B,
\end{equation*}
and 
\begin{equation*}
\frac{1}{1+e^{\mathcal{E}(k)+V(x)-\mu_1}}\leq f_{0}(x,k)\leq \frac{1}{1+e^{\mathcal{E}(k)+V(x)-\mu_2}}, \, \, \forall\, x \in \mathbb{T}^d, \,  \forall\, k\in B.
\end{equation*}
Then there is a unique distributional solution $f\in C([0,\infty), L^1(\mathbb{T}^d\times B))$ to \eqref{Eq} satisfying the same bounds
\begin{equation}\label{b1}
\frac{1}{1+e^{\mathcal{E}(k)+V(x)-\mu_1}}\leq f(t,x,k)\leq \frac{1}{1+e^{\mathcal{E}(k)+V(x)-\mu_2}}
\end{equation}
for all $t\geq 0,$ $x \in \mathbb{T}^d,$ and $k\in B.$  Moreover, assume that  the matrix
$$\int_B \frac{\partial^2\mathcal{E}(k)}{\partial k^2}e^{-\mathcal{E}(k)} dk\in \mathbb{R}^{d\times d}$$ is positive definite, and there is a constant $\sigma_{-}>0$ such that
\begin{equation*}
   \sigma_{-} \leq \sigma(x,k,k'), \, \, \forall\, x\in \mathbb{T}^d,\,\,\, \forall\, k,k'\in B.
\end{equation*}
 Then there exist  explicitly computable constants $\lambda>0$ and $ c>0$  such that  the solution $f$  satisfies
\begin{equation}\label{lamda}
\int_{\mathbb{T}^d}\int_B\big(f(t,x,k)-f_{\infty}(x,k)\big)^2e^{\mathcal{E}(k)}dkdx\leq c e^{-2\lambda t}\int_{\mathbb{T}^d}\int_B\big(f_0(x,k)-f_{\infty}(x,k)\big)^2e^{\mathcal{E}(k)}dkdx
\end{equation}
for all $t\geq 0.$ 
\end{theorem}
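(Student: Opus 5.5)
The proof has two independent parts: well-posedness with the bounds \eqref{b1}, and exponential decay via the Lyapunov functional $\mathrm{E}(f)$ from \eqref{E(f)}.

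\textbf{Existence and the bounds \eqref{b1}.} Write $Q(f)=G(f)(1-f)-L(f)f$ with
\[
G(f)=\int_B\sigma e^{-\mathcal{E}}f'\,dk',\qquad L(f)=\int_B\sigma e^{-\mathcal{E}'}(1-f')\,dk'.
\]
Along the characteristics of the transport field $(\nabla_k\mathcal{E},-\nabla_xV)$ the equation becomes an ODE. The field is smooth and divergence free, and it preserves $\mathcal{E}(k)+V(x)$.

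Consider the closed convex set $\mathcal{K}$ of $L^1$ functions lying between $f^{\mu_1}$ and $f^{\mu_2}$, where
\[
f^{\mu}(x,k)=\frac{1}{1+e^{\mathcal{E}(k)+V(x)-\mu}}.
\]
Each $f^{\mu}$ is a stationary solution. On $\mathcal{K}$ the collision operator is Lipschitz in $L^1$, since $\sigma\le\sigma_+$; when $B=\mathbb{R}^d$ the Gaussian factor $e^{-\mathcal{E}}$ makes the integrals finite.

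I will solve the linearized problem
\[
\partial_t f+\nabla_k\mathcal{E}\cdot\nabla_x f-\nabla_xV\cdot\nabla_k f=G(g)(1-f)-L(g)f
\]
for a given $g\in\mathcal{K}$. Its solution operator is order preserving, and $G,L$ are monotone in $g$. Comparing with the stationary $f^{\mu_i}$ then shows that $\mathcal{K}$ is invariant. A Banach fixed point on a short time interval, iterated in time, gives a global solution. Uniqueness follows from an $L^1$ Gronwall estimate.

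\textbf{Decay.} Set $h=\log\frac{f}{1-f}+\mathcal{E}+V$, so that $F=e^h$. The bounds \eqref{b1} give $\mu_1\le h\le\mu_2$. They also make $f(1-f)$ comparable to $e^{-\mathcal{E}-V}$ up to constants, and $e^{-V}$ is bounded above and below on the torus. Consequently both $\mathrm{H}(f|f_\infty)$ and the entropy dissipation $D$ in \eqref{dt H2} are equivalent to weighted quadratic quantities:
\[
\mathrm{H}(f|f_\infty)\simeq\|f-f_\infty\|^2,\qquad D\gtrsim\sigma_-\iint e^{-\mathcal{E}}\bigl(h-\langle h\rangle_x\bigr)^2,
\]
where $\|\cdot\|$ is the weighted norm in \eqref{lamda} and $\langle h\rangle_x$ is the $k$-average of $h$ with weight $e^{-\mathcal{E}}$. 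The second estimate uses $(F-F')(\log F-\log F')\gtrsim(h-h')^2$ for $h,h'$ in a bounded interval. This controls the microscopic part: $f$ minus the local equilibrium with chemical potential $\mu(t,x)=\langle h\rangle_x$.

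The macroscopic part is the density $\rho=\int_B(f-f_\infty)\,dk$, which is controlled by $\mu-\mu_\infty$. Since $\mathrm{H}(f|f_\infty)$ is also comparable to $\|\mu-\mu_\infty\|_{L^2_x}^2+\|h-\mu\|^2$, it suffices to produce dissipation of $\mu-\mu_\infty$.

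\textbf{The corrector.} Differentiate the $\delta$-term of $\mathrm{E}$ in time. Use $\log\frac{f}{1-f}=h-\mathcal{E}-V$ and note that the terms coming from $-\mathcal{E}-V$ integrate against $\nabla_k\mathcal{E}\,e^{-\mathcal{E}}$ to zero or to lower order. The transport term applied to the local-equilibrium part of $h$ then yields
\[
-\int_{\mathbb{T}^d}\nabla_x\phi\cdot A\nabla_x\mu\,dx,
\]
after integrating by parts in $k$; this is where $\int_B\partial_k^2\mathcal{E}\,e^{-\mathcal{E}}=A$ enters. Linearizing the map $\mu\mapsto\rho$ near the admissible range, and using the elliptic equation $-\mathrm{div}(A\nabla\phi)=\rho$, this term is bounded below by $c\|\mu-\mu_\infty\|^2$ minus errors. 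Mass conservation gives $\int\rho=0$, which makes $\phi$ well defined and supplies the Poincaré inequality.

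The remaining terms pair $\nabla\phi$, or $\nabla^2\phi$, with the microscopic part $h-\mu$, with $Q(f)$, and with $\partial_t\phi$. The term $\partial_t\phi$ comes from $\partial_t\rho=-\mathrm{div}_x j$, where the flux $j$ is microscopic. Elliptic regularity bounds $\|\nabla\phi\|_{H^1}$ and $\|\partial_t\nabla\phi\|_{L^2}$ by $\|\rho\|_{L^2}$ and $\|j\|_{L^2}$. Cauchy–Schwarz then bounds each of these terms by $\eta\|\mu-\mu_\infty\|^2+C_\eta D$. The $\nabla V$ terms are handled the same way because $V$ is smooth.

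\textbf{Closing the argument.} Choosing $\delta$ small makes $\mathrm{E}\simeq\|f-f_\infty\|^2$, because the $\delta$-term is bounded by $\|\rho\|\,\|h-\mu_\infty\|$. It also gives
\[
\frac{d}{dt}\mathrm{E}\le-\bigl(D/2+c\delta\|\mu-\mu_\infty\|^2\bigr)\le-2\lambda\,\mathrm{E}.
\]
Gronwall's inequality then yields \eqref{lamda}.

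\textbf{Main obstacle.} The hardest step is the nonlinear bookkeeping in $\frac{d}{dt}$ of the corrector. The transport term produces $\log\frac{f}{1-f}$ rather than $f$, so extracting exactly the quadratic form in $\mu-\mu_\infty$, with every remainder controlled by $D$, relies essentially on \eqref{b1}.
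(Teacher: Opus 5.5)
Your proof is correct. It uses the paper's Lyapunov functional (relative entropy plus $\delta\int_{\mathbb{T}^d}\nabla_x\phi\cdot J\,dx$), the paper's existence scheme, and the same treatment of $\partial_t J$. The genuine difference is how you split $f$ into microscopic and macroscopic parts.

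The paper uses the density-matching projection $\Pi f$. Then $\rho-\rho_\infty$ and $\mu-\mu_\infty$ have the same sign pointwise, $\|\rho-\rho_\infty\|_{L^2}\simeq\|\Pi f-f_\infty\|$ with no microscopic error, and the macroscopic term $-\int(\rho-\rho_\infty)(\mu-\mu_\infty)\,dx$ is coercive outright. The cost is that the microscopic coercivity $\mathcal{D}\gtrsim\|f-\Pi f\|^2$ needs a separate computation: expanding $(F-F')^2$ around $e^{\mu}$ and using $\int_B(f-\Pi f)\,dk=0$.

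You take instead $\mu=\langle h\rangle_x$, which is linear in $h$ and therefore obviously well defined. Microscopic coercivity then reduces to the variance identity $\iint e^{-\mathcal{E}-\mathcal{E}'}(h-h')^2\,dk\,dk'=2\big(\int_Be^{-\mathcal{E}}dk\big)\int_Be^{-\mathcal{E}}(h-\langle h\rangle_x)^2dk$. Moreover, $\int_Be^{-\mathcal{E}}(h-\mu_\infty)^2dk$ splits exactly, by orthogonality, into micro and macro parts. The price is that $\rho-\rho_\infty$ is no longer a function of $\mu$ alone. You therefore pick up the cross term $\int_{\mathbb{T}^d}\big(\int_B(f-f^{\mu})\,dk\big)(\mu-\mu_\infty)\,dx$ and a microscopic piece of $\|\rho-\rho_\infty\|$. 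Both are absorbed by $\eta\|\mu-\mu_\infty\|^2+C_\eta\mathcal{D}$, as you indicate.

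Writing the equation as $\partial_th+\nabla_k\mathcal{E}\cdot\nabla_xh-\nabla_xV\cdot\nabla_kh=Q(f)/(f(1-f))$, using that the transport preserves $\mathcal{E}+V$, is also a tidier way to organize $\partial_tJ$ than the paper's.

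Three small things to fix:
\begin{enumerate}
\item The macroscopic term is bounded \emph{above} by $-c\|\mu-\mu_\infty\|^2$ plus errors, not below by $c\|\mu-\mu_\infty\|^2$. Your closing inequality uses the correct sign.
\item The collision term needs $\iint|Q(f)|^2e^{\mathcal{E}}\,dk\,dx\lesssim\mathcal{D}$. This requires Cauchy--Schwarz inside the $k'$-integral together with $(F-F')^2\le e^{\mu_2}(F-F')(\log F-\log F')$, not only in the outer pairing.
\item Differentiating $\mathrm{E}$ in time needs a regularization argument for the merely $C([0,\infty),L^1)$ solution, as the paper does at the end of its proof.
\end{enumerate}
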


Note that \eqref{lamda} shows that $f(t,x,k)$ converges exponentially to $f_{\infty}(x,k)$ in the weighted $L^2$-norm $$\sqrt{\int_{\mathbb{T}^d}\int_B\big(f(t,x,k)-f_{\infty}(x,k)\big)^2e^{\mathcal{E}(k)}dxdk}. $$ If the Brillouin zone $B$ is bounded, then $e^{\mathcal{E}}$ is bounded from below and above, hence this weighted $L^2$-norm is equivalent to the usual $L^2$-norm.

We will show in the proof of Lemma \ref{lem:eq} that the relative entropy $\mathrm{H}(f(t)|f_{\infty})$ is equivalent to the square of  the above weighted $L^2$-norm. Therefore, \eqref{lamda} is equivalent to
\begin{equation*}
\mathrm{H}(f(t)|f_{\infty})\leq \tilde{c}e^{-2\lambda t}\mathrm{H}(f_0|f_{\infty}),\,\,\,\, \forall\, t\geq 0,
\end{equation*}
for some constant $\tilde{c}>0$.

As mentioned above, in Theorem \ref{main th}  $B$ is a bounded set (which is identified with the torus corresponding to the periodicity of the lattice)  or $\mathbb{R}^d.$
 The case $B=\mathbb{R}^d$ and $\mathcal{E}(k)=|k|^2$ corresponds to the parabolic band approximation.  In  this case we can compute 
$$\int_B \frac{\partial^2\mathcal{E}(k)}{\partial k^2}e^{-\mathcal{E}(k)} dk=\left(\frac{1}{d}\int_{\mathbb{R}^d} |k|^2e^{-|k|^2} dk\right) \mathrm{Id}=\frac{\pi^{d/2}}{2}\mathrm{Id},$$ where $\mathrm{Id}$ denotes the identity matrix in $\mathbb{R}^{d\times d}$.  Since this matrix is positive definite, Theorem \ref{main th} also holds for the parabolic band approximation case. One can check that the examples of $\mathcal{E}(k)$ in \cite[Appendix A]{Maj1} also satisfy this condition.

There are more general models than   Equation \eqref{Eq}: As done in previous works  \cite{P.S, G.P,Neu.Schmeis, NeuMou, Asym6, Asym2, Asym4, Asym3, Asym5, Asym1,Chen, P.T},  we assume in Theorem \ref{main th} that the interaction rate $\sigma(x,k,k')$ is smooth and  bounded from below and above. In some cases the interaction rate  can be highly non-smooth, see  \cite[Chapter 4]{Book2}, \cite{Maj1, Maj2, Maj3}.
 Also, consideration of
particle-particle interactions gives rise to a nonlinear collision operator of fourth order, see \cite{Des}, \cite[Chapter 4]{Book2}. If we take into account the generation and recombination of electrons and holes, we can  obtains a
coupled system for the distributions of electrons and holes, see \cite{Pop., G.P}. Extending our results to such models  would of
course be interesting. 
Yet, this extension is not within reach so far and will be a
matter of further study. We expect that the techniques used in this paper can
be useful to study the long-time behavior of these models.
\section{Existence and uniqueness}
In this section we prove the existence part of Theorem \ref{main th}.
\begin{theorem}\label{existence}
Under the assumption of Theorem \ref{main th},  there is a unique distributional solution $f\in C([0,\infty), L^1(\mathbb{T}^d\times B))$ to \eqref{Eq} satisfying the bounds \eqref{b1}.
\end{theorem}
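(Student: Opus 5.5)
The plan is to build the solution by a monotone iteration along characteristics, with sub- and supersolutions given by the two Fermi--Dirac distributions. Write $g_i(x,k)\colonequals \big(1+e^{\mathcal{E}(k)+V(x)-\mu_i}\big)^{-1}$ for $i=1,2$.

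\textbf{Step 1 (the barriers).} Each $g_i$ is a stationary solution of \eqref{Eq}. Indeed, $\mathcal{E}(k)+V(x)$ is invariant under the Hamiltonian flow $\dot x=\nabla_k\mathcal{E}(k)$, $\dot k=-\nabla_xV(x)$, so the transport term vanishes on $g_i$. The collision term also vanishes, because $Q$ is zero on local equilibria of the form \eqref{loc}.

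\textbf{Step 2 (reformulation).} Split $Q(f)=G(f)-fL(f)$ with
$$G(f)=\int_B\sigma e^{-\mathcal{E}}(1-f)f'\,dk', \qquad L(f)=\int_B\sigma\big(e^{-\mathcal{E}}f'+e^{-\mathcal{E}'}(1-f')\big)\,dk'.$$
Then $Q(f)=\int_B\sigma e^{-\mathcal{E}}f'\,dk'-fL(f)$. For $f$ with values in $[0,1]$, $L(f)$ is bounded by a constant $C$ that depends on $\sigma_+$ and on $\int_B(e^{-\mathcal{E}}+e^{-\mathcal{E}'})$; when $B=\mathbb{R}^d$ this requires some care with $e^{-\mathcal{E}'}$, and I would use the weight bound $f'\le g_2'$. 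Adding $Cf$ to both sides gives
$$\partial_tf+\nabla_k\mathcal{E}\cdot\nabla_xf-\nabla_xV\cdot\nabla_kf+Cf=\mathcal{N}(f),\qquad \mathcal{N}(f)\colonequals Cf+Q(f).$$
The right-hand side $\mathcal{N}(f)$ is nondecreasing in $f'$, since its coefficient $\sigma(e^{-\mathcal{E}}(1-f)+\ldots)$ is nonnegative. It is also nondecreasing in the local value $f$, since $\partial_f\mathcal{N}=C-L(f)-(\text{gain derivative})\ge 0$ once $C$ is taken large enough. Since the data are smooth, the Hamiltonian flow $(X,K)(s;t,x,k)$ is globally defined and measure preserving on $\mathbb{T}^d\times B$, so the equation has the mild (Duhamel) form
$$f(t)=e^{-Ct}f_0\circ\Phi_{-t}+\int_0^te^{-C(t-s)}\mathcal{N}(f(s))\circ\Phi_{-(t-s)}\,ds.$$

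\textbf{Step 3 (iteration and convergence).} Start from $f^0=g_1$ and define $f^{n+1}$ as the mild solution with source $\mathcal{N}(f^n)$. The monotonicity of $\mathcal{N}$ and the invariance of $g_1,g_2$ under the mild map (Step 1) give, by induction,
$$g_1\le f^n\le f^{n+1}\le g_2.$$
Monotone convergence then yields a limit $f$ that satisfies the mild equation and the bounds \eqref{b1}, and hence is a distributional solution. Continuity in time with values in $L^1$ follows from the Duhamel formula, because translation along the flow is continuous in $L^1$ and $\mathcal{N}(f)$ is bounded.

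\textbf{Step 4 (uniqueness).} Let $f,h$ be two solutions satisfying \eqref{b1}. Both are mild solutions, and $Q$ is Lipschitz in $L^1$ on the order interval $[g_1,g_2]$. The constant is of the type $\sigma_+\sup_k\int_B(e^{-\mathcal{E}}+e^{-\mathcal{E}'})dk'$ and comes from expanding the bilinear terms $(1-f)f'$ and $f(1-f')$. Gr\"onwall's inequality applied to $\|f(t)-h(t)\|_{L^1}$ then gives $f=h$.

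I expect the main obstacle to be twofold. The first is showing that distributional solutions coincide with mild ones, which I would handle with a renormalization/DiPerna--Lions-type argument; this is standard here because the vector field is smooth and divergence free. The second is the integrability issue when $B=\mathbb{R}^d$. There I would use $f'\le g_2'\lesssim e^{-\mathcal{E}'}$ in every $k'$-integral to keep the constants finite.
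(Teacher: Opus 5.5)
Your proof is correct, but it gets existence by a different mechanism from the paper's.

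\textbf{The paper's route.} The paper freezes the nonlocal coefficients. With $\mathcal A(f)=e^{-\mathcal E}\int_B\sigma f'\,dk'$ and $\mathcal B(f)=\int_B\sigma(1-f')e^{-\mathcal E'}\,dk'$ (so $\mathcal A(f)+\mathcal B(f)$ is your $L(f)$), it sets $\Gamma(f)=g$, where $g$ solves the \emph{linear} problem
$\partial_t g+\nabla_k\mathcal E\cdot\nabla_x g-\nabla_xV\cdot\nabla_k g+[\mathcal A(f)+\mathcal B(f)]g=\mathcal A(f)$ with $g(0)=f_0$.
A sign argument for $g-g_1$ shows that $\Gamma$ preserves the order interval $[g_1,g_2]$. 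This uses only that $\mathcal A$ is increasing and $\mathcal B$ decreasing in $f$, together with $Q(g_1)=0$. The paper then proves $\|\Gamma(f)(t)-\Gamma(h)(t)\|_{L^1}\le K\int_0^t\|f(s)-h(s)\|_{L^1}\,ds$, so a single fixed-point argument gives existence and uniqueness together.

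\textbf{Your route.} You make the whole right-hand side order preserving by adding $Cf$ with $C\ge\sup L$. You build the solution as the monotone limit of iterates started at $g_1$. Uniqueness is proved separately through an $L^1$ Lipschitz bound and Gr\"onwall, which is essentially the paper's contraction estimate.

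\textbf{What each buys.} The paper's route is shorter. It handles the local dependence exactly through the frozen absorption coefficient $\mathcal A(f)+\mathcal B(f)$, so it needs neither the shift $C$ nor monotonicity in the local value. Your route gives global existence without restarting on short time intervals. It also makes the order-preserving structure explicit, so a comparison principle ($f_0\le h_0\Rightarrow f(t)\le h(t)$) follows at once. Finally, you spell out why distributional solutions coincide with mild ones, a point the paper uses tacitly when it identifies solutions with fixed points of $\Gamma$.

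\textbf{One small slip.} Your $L(f)$ already contains $e^{-\mathcal E}\int_B\sigma f'\,dk'$. So the gain term must be $\int_B\sigma e^{-\mathcal E}f'\,dk'$, as in your next line, not $\int_B\sigma e^{-\mathcal E}(1-f)f'\,dk'$. With this correction, $\partial_f\mathcal N=C-L(f)$ exactly, with no extra gain derivative.
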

\begin{proof} We first define the following operators
$$\mathcal A(f)\colonequals e^{-\mathcal{E}(k)}\int_B \sigma(x,k,k') f'dk', $$
$$\mathcal B(f)\colonequals \int_B \sigma(x,k,k') (1-f')e^{-\mathcal{E}'}dk'.$$
With these notations $f$ is a solution if it satisfies  
$$\partial_t f+\nabla_{k}{\mathcal{E}}\cdot \nabla_x f-\nabla_x V\cdot \nabla_k f+[\mathcal A(f)+\mathcal B(f)]f=\mathcal A(f).$$
We define the set $$\mathcal{V}\colonequals \big\{f\in C([0,\infty), L^1(\mathbb{T}^d\times B)):\,\,f\,\,\text{satisfies}\,\,\eqref{b1} \big\}.$$ For a given $f\in \mathcal{V}$, let $g$ be the solution of the linear transport equation
$$\begin{cases}\partial_t g+\nabla_{k}{\mathcal{E}}\cdot \nabla_x g-\nabla_x V\cdot \nabla_k g+[\mathcal A(f)+\mathcal B(f)]g=\mathcal A(f)\\
g(0,x,k)=f_{0}(x,k).
\end{cases}
$$
We then define the map $\Gamma(f)=g.$ Obviously, a fixed point of $\Gamma$ correspond to a solution of the equation.
 We  show that $\Gamma$ maps $\mathcal{V}$ to itself.
Let  $r\colonequals g-\frac{1}{1+e^{\mathcal{E}+V-\mu_1}}.$
 Then we have 
\begin{align*}
\partial_t r+\nabla_{k}{\mathcal{E}}\cdot \nabla_x r-\nabla_x V\cdot \nabla_k r+[\mathcal A(f)+\mathcal B(f)]r
= \left(1-\frac{1}{1+e^{\mathcal{E}+V-\mu_1}}\right) \mathcal A(f)-\frac{\mathcal B(f)}{1+e^{\mathcal{E}+V-\mu_1}}.
\end{align*}
Since $\mathcal A(f)$ and $\mathcal B(f)$ are respectively  increasing and decreasing functions of $f$, we have  \begin{align*}
\left(1-\frac{1}{1+e^{\mathcal{E}+V-\mu_1}}\right)&\mathcal A(f)-\frac{\mathcal B(f)}{1+e^{\mathcal{E}+V-\mu_1}}\\ &\geq \left(1-\frac{1}{1+e^{\mathcal{E}+V-\mu_1}}\right)\mathcal A\left(\frac{1}{1+e^{\mathcal{E}+V-\mu_1}}\right)- \frac{\mathcal B\left(\frac{1}{1+e^{\mathcal{E}+V-\mu_1}}\right)}{1+e^{\mathcal{E}+V-\mu_1}}\\
&=Q\left(\frac{1}{1+e^{\mathcal{E}+V-\mu_1}}\right)=0. 
\end{align*}
This estimate shows 
$$\partial_t r+\nabla_{k}{\mathcal{E}}\cdot \nabla_x r-\nabla_x V\cdot \nabla_k r+[\mathcal A(f)+\mathcal B(f)]r\geq 0.$$
Since  $r(0,x,k)\geq 0$, we conclude $r(t,x,k)\geq 0.$  This means  $g\geq \frac{1}{1+e^{\mathcal{E}+V-\mu_1}}$.  Similar arguments as above imply  $g\leq  \frac{1}{1+e^{\mathcal{E}+V-\mu_2}}$ and so $g\in \mathcal{V}.$

 Next, we show that $\Gamma$ is a contraction for sufficiently small time
intervals.
Let $f$ and  $h$ be in $ \mathcal{V}.$ We define $g\colonequals \Gamma(f)$ and $u\colonequals \Gamma(h).$ 
Then we have
\begin{align*}
\partial_t (g-u)+\nabla_{k}{\mathcal{E}}\cdot \nabla_x (g-u)&-\nabla_x V\cdot \nabla_k (g-u)+[\mathcal A(f)+\mathcal B(f)](g-u)\\
= &(1-u)\mathcal A(f-h)-u\mathcal B(f-h).
\end{align*}
We multiply this equation by $\mathrm{sign}( g-u)$
\begin{align*}
\partial_t |g-u|+\nabla_{k}{\mathcal{E}}\cdot \nabla_x |g-u|&-\nabla_x V\cdot \nabla_k |g-u|+[\mathcal A(f)+\mathcal B(f)]|g-u|\\
= & \mathrm{sign}( g-u) [(1-u)\mathcal A(f-h)-u \mathcal B(f-h)].
\end{align*}
 We  integrate  and use $\mathcal A(f)+\mathcal B(f)\geq 0$
 \begin{align*}
 \frac{d}{dt}||g-u||_{L^1(\mathbb{T}^d\times B)}\leq ||  (1-u)A(f-h)-uB(f-h)||_{L^1(\mathbb{T}^d\times B)}.
 \end{align*}
Using $0<u<1$ we  estimate
 $$||(1-u)\mathcal A(f-h)||_{L^1(\mathbb{T}^d\times B)}\leq ||\mathcal A(f-h)||_{L^1(\mathbb{T}^d\times B)}\leq \sigma_+\left(\int_B e^{-\mathcal{E}(k)}dk\right) ||f-h||_{L^1(\mathbb{T}^d\times B)}. $$
 Since $u\in \mathcal{V}$ and $u\leq \frac{1}{1+e^{\min_{\mathbb{T}^d}\{V\}+\mathcal{E}-\mu_2}},$  we have 
 $$||u\mathcal B(f-h)||_{L^1(\mathbb{T}^d\times B)}\leq \sigma_+ e^{-\min_B\{\mathcal{E}\}}\left(\int_{B}\frac{dk}{1+e^{\min_{\mathbb{T}^d}\{V\}+\mathcal{E}(k)-\mu_2}}\right) ||f-h||_{L^1(\mathbb{T}^d\times B)}. $$
 This leads 
 \begin{align*}
&|| \Gamma (f)(t)-\Gamma (h)(t)||_{L^1(\mathbb{T}^d\times B)}\\
 &\leq \left[\int_B \sigma_+ e^{-\mathcal{E}(k)}dk+\int_{B}\frac{\sigma_+ e^{-\min_B\{\mathcal{E}\}}dk}{1+e^{\min_{\mathbb{T}^d}\{V\}+\mathcal{E}(k)-\mu_2}}\right]\int_0^t ||f(s)-h(s)||_{L^1(\mathbb{T}^d\times B)}ds.
 \end{align*}
 Then the fixed point arguments provide the existence of a unique solution.
\end{proof}

\section{Relative Entropy}
In this section we show that  the relative entropy decays, and we get bounds on the collision operator with respect to the dissipation functional.

For non-negative function $f$ we define the relative entropy 
$$\mathrm{H}(f|f_{\infty})\colonequals \int_{\mathbb{T}^d}\int_B\left( f\log\frac{f}{f_{\infty}}+(1-f)\log\frac{1-f}{1-f_{\infty}}\right)dkdx.$$
We note, if $f$ and $f_{\infty}$ have the same mass, then
$$\mathrm{H}(f|f_{\infty})=\mathrm{H}(f)-\mathrm{H}(f_{\infty}).$$
\begin{lemma} Let  $\mathcal{E}:B\to \mathbb{R},$  $V:\mathbb{T}^d\to \mathbb{R}$, and $\sigma :\mathbb{T}^d\times B\times B\to \mathbb{R}$ be smooth functions. Let $f$ be in $W^{1,\infty}([0,T]\times \mathbb{T}^d\times B)$ for all $T>0$ and $0<f<1.$ If $f$ is a solution of \eqref{Eq}, then 
\begin{align} \label{dtH}
\frac{d}{dt}\mathrm{H}(f(t)|f_{\infty})=
-\frac{1}{2}\int_{\mathbb{T}^d}\int_{ B}\int_{ B}&\sigma(x,k,k')e^{-\mathcal{E}-\mathcal{E}'-V}(1-f)(1-f')\times \nonumber \\
&\times (F-F')(\log F-\log F') dk'dkdx\leq 0,
\end{align}
where $F\colonequals \frac{f}{(1-f)e^{-\mathcal{E}-V}}.$
\end{lemma}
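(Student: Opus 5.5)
I will differentiate under the integral sign. This is justified by the regularity $f\in W^{1,\infty}([0,T]\times\mathbb{T}^d\times B)$ together with $0<f<1$, at least when $B$ is bounded. For $B=\mathbb{R}^d$ one additionally uses the decay implicit in the setting, e.g.\ the bounds \eqref{b1}. The starting observation is that
\[
\partial_f\Big(f\log\tfrac{f}{f_\infty}+(1-f)\log\tfrac{1-f}{1-f_\infty}\Big)=\log\frac{f}{1-f}-\log\frac{f_\infty}{1-f_\infty}.
\]
Since $\frac{f_\infty}{1-f_\infty}=e^{-\mathcal{E}-V+\mu_\infty}$, this derivative equals $\log F-\mu_\infty$. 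Hence
\[
\frac{d}{dt}\mathrm{H}(f|f_\infty)=\int_{\mathbb{T}^d}\int_B \partial_t f\,(\log F-\mu_\infty)\,dk\,dx .
\]
I then substitute $\partial_t f=-\nabla_k\mathcal{E}\cdot\nabla_x f+\nabla_xV\cdot\nabla_k f+Q(f)$ from \eqref{Eq} and treat the transport and collision parts separately.

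\textbf{Transport part.} The constant $\mu_\infty$ contributes nothing, since the transport field $(\nabla_k\mathcal{E},-\nabla_xV)$ is divergence free and periodicity/decay kills boundary terms. Writing $\log F=\log\frac{f}{1-f}+\mathcal{E}+V$, the term with $\log\frac{f}{1-f}$ is the transport of $\Phi(f)$ with $\Phi(s)=s\log s+(1-s)\log(1-s)$, because $\Phi'(s)=\log\frac{s}{1-s}$. It therefore integrates to zero for the same divergence-free reason. What remains is
\[
\int\!\!\int(-\nabla_k\mathcal{E}\cdot\nabla_xf+\nabla_xV\cdot\nabla_kf)(\mathcal{E}+V)\,dk\,dx .
\]
Integrating by parts turns this into $\int\!\!\int f\,(\nabla_k\mathcal{E}\cdot\nabla_xV-\nabla_xV\cdot\nabla_k\mathcal{E})\,dk\,dx=0$, reflecting that $\mathcal{E}+V$ is conserved along characteristics.

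\textbf{Collision part.} Here I rewrite $Q$ using $F$. We have $f=F(1-f)e^{-\mathcal{E}-V}$, and similarly for $f'$. Hence
\[
e^{-\mathcal{E}}(1-f)f'-e^{-\mathcal{E}'}(1-f')f=e^{-\mathcal{E}-\mathcal{E}'-V}(1-f)(1-f')(F'-F).
\]
The kernel $\sigma(x,k,k')e^{-\mathcal{E}-\mathcal{E}'-V}(1-f)(1-f')$ is symmetric in $k\leftrightarrow k'$, by $\sigma(x,k,k')=\sigma(x,k',k)$. The constant $\mu_\infty$ integrates to zero by antisymmetry of $F'-F$, which is mass conservation. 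Symmetrizing the remaining term in $k,k'$ gives
\[
\int\!\!\int\!\!\int \sigma\,e^{-\mathcal{E}-\mathcal{E}'-V}(1-f)(1-f')(F'-F)\log F=-\frac12\int\!\!\int\!\!\int\sigma\,e^{-\mathcal{E}-\mathcal{E}'-V}(1-f)(1-f')(F-F')(\log F-\log F').
\]
This is \eqref{dtH}. The sign $\le0$ follows because $\sigma\ge0$, $0<f,f'<1$, and $\log$ is increasing.

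\textbf{Main obstacle.} The algebra is routine. The delicate point is the rigorous justification of the integrations by parts and of differentiating under the integral. Near $f\to0$ or $f\to1$, $\log\frac{f}{1-f}$ could blow up, and when $B=\mathbb{R}^d$ the terms involving $\mathcal{E}$ require integrability in $k$. I would handle this by working on $[0,T]$, where $f\in W^{1,\infty}$ and $0<f<1$. In the unbounded case I would invoke the Fermi–Dirac bounds \eqref{b1} of the constructed solution: these make $\log F$ bounded, and make $f,1-f$ comparable to $e^{-\mathcal{E}-V}$ tails, so that all integrands are integrable and the boundary terms vanish.
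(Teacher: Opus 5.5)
Your proposal is correct and follows essentially the same route as the paper. Both arguments differentiate to get $\int\!\!\int \partial_t f\,(\log F-\mu_\infty)$, kill the $\mu_\infty$ term, show the transport contribution vanishes by writing $\log\frac{f}{1-f}=\Phi'(f)$ and integrating by parts against $\mathcal{E}+V$, and then symmetrize $\int\!\!\int Q(f)\log F$ under $k\leftrightarrow k'$. The only difference is cosmetic: the paper removes $\mu_\infty$ in one step via mass conservation, whereas you split this into the divergence-free transport field and the antisymmetry of $F'-F$, which is the same fact. Your comments on justifying the integrations by parts go slightly beyond the paper, which simply asserts them.
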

\begin{proof}
We compute 
\begin{align*}
\frac{d}{dt}\mathrm{H}(f(t)|f_{\infty})&=\int_{\mathbb{T}^d}\int_B\left(\partial_tf \log\frac{f}{1-f}-\partial_tf \log\frac{f_{\infty}}{1-f_{\infty}}\right)dxdk\\&=\int_{\mathbb{T}^d}\int_B\left(\partial_tf \log\frac{f}{(1-f)}+\partial_tf (\mathcal{E}+V-  \mu_{\infty})\right)dxdk.
\end{align*}
The conservation of mass implies  $$\int_{\mathbb{T}^d}\partial_tf  \mu_{\infty}dxdk=0.$$
 Integrating by parts one can easily show 
$$\int_{\mathbb{T}^d}\int_B\left(\nabla_k \mathcal{E}(k)\cdot \nabla_x f-\nabla_x V(x)\cdot \nabla_k f\right) (\mathcal{E}(k)+V(x)))dkdx=0.  $$
We can write $$\nabla_x f \log\frac{f}{1-f}=\nabla_x\left(f\log{f}+(1-f)\log{(1-f)}\right)$$ and $$\nabla_k f \log\frac{f}{(1-f)}=\nabla_k\left(f\log{f}+(1-f)\log{(1-f)}\right).$$
These equations and integration by parts show
$$\int_{\mathbb{T}^d}\int_B\left(\nabla_k \mathcal{E}(k)\cdot \nabla_x f-\nabla_x V(x)\cdot \nabla_k f\right) \log\frac{f}{1-f}dkdx=0.  $$ 
Hence, we get 
\begin{align*}
\frac{d}{dt}\mathrm{H}(f(t)|f_{\infty})=\int_{\mathbb{T}^d}\int_BQ(f)\log\frac{f}{(1-f)e^{-\mathcal{E}(k)-V(x)}}dxdk.
\end{align*}
By recalling $F\colonequals \frac{f}{(1-f)e^{-\mathcal{E}(k)-V(x)}}$ we compute 
\begin{align*}
&\int_{\mathbb{T}^d}\int_B Q(f)\log{F}dxdk\\&=\int_{\mathbb{T}^d}\int_{ B}\left(\int_{ B}\sigma(x,k,k')e^{-\mathcal{E}-\mathcal{E}'-V}(1-f)(1-f')(F'-F)\log F dk'\right)dkdx. 
\end{align*}
In this equation we change the order of the integrals with respect to $k'$ and $k$, and use  $\sigma(x,k,k')=\sigma(x,k',k)$
\begin{align*}
&\int_{\mathbb{T}^d}\int_BQ(f)\log{F}dxdk\\&=\int_{\mathbb{T}^d}\int_{ B}\left(\int_{ B}\sigma(x,k,k')e^{-\mathcal{E}-\mathcal{E}'-V}(1-f)(1-f')(F-F')\log F' dk'\right)dkdx. 
\end{align*}
We add the last two equations and obtain
 \begin{align*}
&\int_{\mathbb{T}^d\times B}Q(f)\log{F}dxdk\\&=-\frac{1}{2}\int_{\mathbb{T}^d}\int_{ B}\int_{ B}\sigma(x,k,k')e^{-\mathcal{E}-\mathcal{E}'-V}(1-f)(1-f')(F-F')(\log F-\log F') dk'dkdx\leq 0. 
\end{align*}
\end{proof}
We define the dissipation function
\begin{align*}
\mathcal{D}(f)\colonequals
\frac{1}{2}\int_{\mathbb{T}^d}\int_{ B}\int_{ B}\sigma(x,k,k')e^{-\mathcal{E}-\mathcal{E}'-V}(1-f)(1-f')(F-F')(\log F-\log F') dk'dkdx.
\end{align*}
With this notation  we can write \eqref{dtH} as
$$\frac{d}{dt}\mathrm{H}(f(t)|f_{\infty})=-\mathcal{D}(f(t))\leq 0.$$
\begin{lemma} Let  $\mathcal{E}:B\to \mathbb{R},$  $V:\mathbb{T}^d\to \mathbb{R}$, and $\sigma :\mathbb{T}^d\times B\times B\to \mathbb{R}$ be smooth functions. Assume that there exists a constant    $\sigma_{+}>0$  such that 
\begin{equation*}
   0 \leq \sigma(x,k,k')\leq \sigma_{+}, \, \, \forall\, x\in \mathbb{T}^d,\,\,\, \forall\, k,k'\in B.
\end{equation*} 
Then, for functions $f$ satisfying \eqref{b1}, there exists a constant $C_1>0$ such that
\begin{align}\label{Q<}
\int_{\mathbb{T}^d}\int_B|Q(f)|^2e^{\mathcal{E}}dkdx \leq C_1 \mathcal{D}(f).
\end{align}
\end{lemma}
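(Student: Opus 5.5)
Write $Q(f)$ in terms of $F=\frac{f}{(1-f)e^{-\mathcal{E}-V}}$. Since $f=(1-f)e^{-\mathcal{E}-V}F$, we have
\begin{equation*}
Q(f)=\int_B\sigma(x,k,k')e^{-\mathcal{E}-\mathcal{E}'-V}(1-f)(1-f')(F'-F)\,dk'.
\end{equation*}
By Cauchy--Schwarz in $k'$ with respect to the measure $\sigma e^{-\mathcal{E}'}(1-f')\,dk'$,
\begin{equation*}
|Q(f)|^2\leq \Big(\int_B \sigma e^{-\mathcal{E}'}(1-f')\,dk'\Big)\int_B\sigma e^{-2\mathcal{E}-\mathcal{E}'-2V}(1-f)^2(1-f')(F-F')^2\,dk'.
\end{equation*}
The first factor is bounded by $\sigma_+\int_B e^{-\mathcal{E}'}dk'<\infty$; this is finite in the bounded case, and also when $B=\mathbb{R}^d$ with quadratic $\mathcal{E}$. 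Multiplying by $e^{\mathcal{E}}$ and integrating gives
\begin{equation*}
\int|Q(f)|^2e^{\mathcal{E}}\leq C\int\!\!\int\!\!\int \sigma e^{-\mathcal{E}-\mathcal{E}'-V}(1-f)(1-f')\,\big[e^{-V}(1-f)\big]\,(F-F')^2.
\end{equation*}
The bracket is bounded by $e^{-\min V}$.

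It remains to compare $(F-F')^2$ with $(F-F')(\log F-\log F')$. By the mean value theorem, $(F-F')(\log F-\log F')=(F-F')^2/\xi$ with $\xi$ between $F$ and $F'$. So $(F-F')^2\leq \max(F,F')\,(F-F')(\log F-\log F')$, and it suffices to bound $F$ uniformly. This is where \eqref{b1} enters. The upper bound $f\leq \frac{1}{1+e^{\mathcal{E}+V-\mu_2}}$ is equivalent to $\frac{f}{1-f}\leq e^{-\mathcal{E}-V+\mu_2}$, that is, $F\leq e^{\mu_2}$. Hence $\max(F,F')\leq e^{\mu_2}$. Combining the two steps gives \eqref{Q<} with
\begin{equation*}
C_1=2\sigma_+e^{\mu_2-\min V}\int_Be^{-\mathcal{E}}\,dk.
\end{equation*}

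The only delicate point is choosing the weight in the Cauchy--Schwarz step so that the leftover factors match those in $\mathcal{D}(f)$ exactly. I expect this bookkeeping of the $e^{-\mathcal{E}}$, $e^{-V}$ and $(1-f)$ factors to be the main care needed. The rest uses only $\sigma\leq\sigma_+$, $0<f<1$, and $F\leq e^{\mu_2}$ from \eqref{b1}; the lower bound in \eqref{b1} is not needed.
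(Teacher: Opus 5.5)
Your proof is correct and follows the paper's route: rewrite $Q(f)$ through $F$, apply Cauchy--Schwarz in $k'$, and bound $\sigma\le\sigma_+$, $1-f\le 1$, $e^{-V}\le e^{-\min V}$. Your explicit step $(F-F')^2\le e^{\mu_2}(F-F')(\log F-\log F')$, using $F\le e^{\mu_2}$ from the upper bound in \eqref{b1}, is actually needed: the paper passes from the integral of $(F'-F)^2$ to $2\mathcal{D}(f)$ as an equality, so its constant $C_1=2\sigma_+e^{-\min V}\int_B e^{-\mathcal{E}}\,dk$ lacks exactly your factor $e^{\mu_2}$.
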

\begin{proof}
The H\"older inequality provides 
\begin{align*}
&\int_{\mathbb{T}^d}\int_B|Q(f)|^2e^{\mathcal{E}+V}dkdx\\
&= \int_{\mathbb{T}^d}\int_B\left( \int_B \sigma(x,k,k') e^{-\mathcal{E}-\mathcal{E}'-V}(1-f)(1-f')(F'-
F)dk'\right)^2e^{\mathcal{E}+V}dkdx\\
&\leq \int_{\mathbb{T}^d}\int_B\left( \int_B \sigma(x,k,k')e^{-\mathcal{E}-\mathcal{E}'-V} (1-f)(1-f')dk'\right)\times \\
&\,\,\,\,\,\,\,\,\,\,\,\,\,\,\,\,\,\,\,\,\,\,\,\,\,\,\,\,\,\,\,\,\,\, \times \left( \int_B \sigma(x,k,k') e^{-\mathcal{E}-\mathcal{E}'-V}(1-f)(1-f')(F'-
F)^2dk'\right)e^{\mathcal{E}+V}dkdx.
\end{align*}
Since $\sigma(x,k',k)\leq \sigma_+,$ $1-f\leq 1,$ and $1-f'\leq 1,$ we have
\begin{align*}
&\int_{\mathbb{T}^d}\int_B|Q(f)|^2e^{\mathcal{E}+V}dkdx\\
&\leq \sigma_+ \left( \int_B  e^{-\mathcal{E}(k')}dk'\right)\int_{\mathbb{T}^d}\int_B \int_B \sigma(x,k,k') e^{-\mathcal{E}-\mathcal{E}'-V}(1-f)(1-f')(F'-
F)^2dk'dkdx\\
&=2\sigma_+ \left( \int_B  e^{-\mathcal{E}(k')}dk'\right)\mathcal{D}(f).
\end{align*}
This proves the claimed estimate with $C_1\colonequals 2\sigma_+e^{-\min_{\mathbb{T}^d}\{V\}}  \int_B  e^{-\mathcal{E}(k')}dk'$.
\end{proof}

\section{Projection operator }
We  denote  the macroscopic densities
$$\rho_{\infty}(x)\colonequals \int_{B}f_{\infty}(x, k)dk$$
and 
$$\rho(t,x)\colonequals \int_{B}f(t,x,k)dk.$$
 We define  a nonlinear projection operator
\begin{equation}\label{pr1}
\Pi f(t,x,k)\colonequals \frac{1}{1+e^{\mathcal{E}(k)+V(x)-\mu(x,t)}}, 
\end{equation}
where $\mu(t,x)$ is chosen by the condition
\begin{equation*}
\int_{B}\Pi f(t,x,k)dk=\rho(t,x).
\end{equation*}
In the following we prove some estimates involving the projection operator $\Pi.$
\begin{lemma} Let  $\mathcal{E}:B\to \mathbb{R}$ and  $V:\mathbb{T}^d\to \mathbb{R}$ be smooth functions.
Then, for functions $f$ satisfying \eqref{b1}, there are constant ${C}_2>0,$ ${C}_3>0,$ and ${C}_4>0$ such that 
\begin{equation}\label{rr}
{C}_2(\Pi f-f_{\infty})^2 e^{\mathcal{E}}\leq (\rho-\rho_{\infty})^2 e^{-\mathcal{E}}\leq {C}_3(\Pi f-f_{\infty})^2e^{\mathcal{E}}.
\end{equation}
and
\begin{equation}\label{rk}
{C}_4(\Pi f-f_{\infty})^2e^{\mathcal{E}}\leq  ({\mu}-{\mu_{\infty}})(\rho-\rho_{\infty})e^{-\mathcal{E}}.
\end{equation}
\end{lemma}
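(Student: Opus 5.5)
The plan is to reduce everything to one-dimensional mean value estimates in the chemical potential, using the uniform bounds \eqref{b1} to keep all constants under control.

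\textbf{Step 1: bounds on $\mu$.} Since $\mu\mapsto \frac{1}{1+e^{\mathcal{E}+V-\mu}}$ is increasing and integrating \eqref{b1} in $k$ gives
\begin{equation*}
\int_B \frac{dk}{1+e^{\mathcal{E}+V-\mu_1}}\leq \rho\leq \int_B \frac{dk}{1+e^{\mathcal{E}+V-\mu_2}},
\end{equation*}
the defining relation $\int_B \Pi f\,dk=\rho$ yields $\mu_1\leq \mu(t,x)\leq \mu_2$. Likewise $\mu_1\leq\mu_\infty\leq\mu_2$, because $f_\infty$ has the same mass as $f_0$. Here $V$ is bounded on the torus.

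\textbf{Step 2: mean value formulas.} Set $g(\mu)=\frac{1}{1+e^{\mathcal{E}+V-\mu}}$, so that $\partial_\mu g=g(1-g)$. For $\mu$ between $\mu_1$ and $\mu_2$, using $\frac{e^{-s}}{(1+e^{-s})^2}\sim e^{-s}$ for $s$ bounded below, we get
\begin{equation*}
c\,e^{-\mathcal{E}}\leq g(1-g)\leq C e^{-\mathcal{E}},
\end{equation*}
with $c,C$ depending on $\mu_1,\mu_2,\|V\|_\infty,\min\mathcal{E}$. When $B$ is bounded, $\mathcal{E}$ is bounded and this is trivial. When $B=\mathbb{R}^d$, we use $\mathcal{E}\geq \min\mathcal{E}$ together with $1-g\in[c',1]$. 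The mean value theorem then gives, pointwise in $k$,
\begin{equation*}
\Pi f-f_\infty=(\mu-\mu_\infty)\,g(1-g)|_{\tilde\mu},
\end{equation*}
hence $|\Pi f-f_\infty|\sim |\mu-\mu_\infty|e^{-\mathcal{E}}$. Since $\Pi f-f_\infty$ has the sign of $\mu-\mu_\infty$ for every $k$, integrating in $k$ gives
\begin{equation*}
\rho-\rho_\infty=(\mu-\mu_\infty)\int_B g(1-g)|_{\tilde\mu(k)}\,dk,
\end{equation*}
so that $|\rho-\rho_\infty|\sim|\mu-\mu_\infty|$. The constants here involve $\int_B e^{-\mathcal{E}}dk<\infty$, and $\rho-\rho_\infty$ has the same sign as $\mu-\mu_\infty$.

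\textbf{Step 3: conclusion.} From Step 2,
\begin{equation*}
(\Pi f-f_\infty)^2e^{\mathcal{E}}\sim(\mu-\mu_\infty)^2e^{-\mathcal{E}}\sim(\rho-\rho_\infty)^2e^{-\mathcal{E}},
\end{equation*}
which is \eqref{rr}. For \eqref{rk}, the same-sign property gives
\begin{equation*}
(\mu-\mu_\infty)(\rho-\rho_\infty)=|\mu-\mu_\infty||\rho-\rho_\infty|\geq c(\mu-\mu_\infty)^2.
\end{equation*}
Multiplying by $e^{-\mathcal{E}}$ and comparing with $(\Pi f-f_\infty)^2e^{\mathcal{E}}\leq C(\mu-\mu_\infty)^2e^{-\mathcal{E}}$ then gives \eqref{rk}.

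The main obstacle is Step 2 in the unbounded case $B=\mathbb{R}^d$. There one needs two-sided bounds on $g(1-g)e^{\mathcal{E}}$ that are uniform in $k$, which hold because $\mathcal{E}$ is bounded below. All constants remain explicit in terms of $\mu_1,\mu_2,V,\mathcal{E}$.
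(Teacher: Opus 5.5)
Your proof is correct and follows essentially the paper's route: both factor $\Pi f-f_\infty$ as a $(t,x)$-dependent scalar times a $k$-dependent factor comparable to $e^{-\mathcal{E}}$, then integrate in $k$. This uses $\mu,\mu_\infty\in[\mu_1,\mu_2]$, which you justify explicitly and the paper uses tacitly. The one difference is that the paper uses the exact identity $\Pi f-f_\infty=(e^{\mu}-e^{\mu_\infty})e^{-\mathcal{E}-V}(1-\Pi f)(1-f_\infty)$ and so needs a second mean value step from $e^{\mu}-e^{\mu_\infty}$ to $\mu-\mu_\infty$ for \eqref{rk}, whereas your mean value theorem in $\mu$ gives \eqref{rk} directly from the sign property.
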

\begin{proof}
By the definition of $\Pi$ we have
  $$\rho-\rho_{\infty}=\int_{B}(\Pi f-f_{\infty})dk=
(e^{\mu}-e^{\mu_{\infty}})e^{-V}\int_{B}(1-\Pi f)(1-f_{\infty})e^{-\mathcal{E}}dk$$
and $$e^{\mu}-e^{\mu_{\infty}}=\frac{\Pi f-f_{\infty}}{(1-\Pi f)(1-f_{\infty})e^{-\mathcal{E}-V}}.$$ 
These equations imply  
\begin{equation}\label{r-r}(\rho-\rho_{\infty})^2=\frac{(\Pi f-f_{\infty})^2 e^{2\mathcal{E}} }{(1-\Pi f)^2(1-f_{\infty})^2}\left(\int_{B}(1-\Pi f)(1-f_{\infty})e^{-\mathcal{E}}dk\right)^2
\end{equation}
and 
\begin{align}\label{r-m}
(\rho-\rho_{\infty})(e^{\mu}-e^{\mu_{\infty}})&=\frac{(\Pi f-f_{\infty})^2 e^{2\mathcal{E}} e^{V}}{(1-\Pi f)^2(1-f_{\infty})^2}\int_{B}(1-\Pi f)(1-f_{\infty})e^{-\mathcal{E}}dk\nonumber \\
& \geq  \frac{(\Pi f-f_{\infty})^2 e^{2\mathcal{E}} e^{\min_{\mathbb{T}^d}\{V\}}}{(1-\Pi f)^2(1-f_{\infty})^2}\int_{B}(1-\Pi f)(1-f_{\infty})e^{-\mathcal{E}}dk.
\end{align}
Because of the bound \eqref{b1} we can estimate $$0< \frac{e^{\min_B \{\mathcal{E}\}+\min_{\mathbb{T}^d} \{V\}-\mu_2}}{1+e^{\min_B \{\mathcal{E}\}+\min_{\mathbb{T}^d} \{V\}-\mu_2}}\leq 1-f<1.$$  The same bounds also hold for $1-\Pi f$ and  $1-f_{\infty}.$ In particular, $\int_{B}(1-\Pi f)(1-f_{\infty})e^{-\mathcal{E}}dk$ is  bounded from below and above by positive constants. These bounds and \eqref{r-r} show that   \eqref{rr} holds. 

Next, \eqref{r-m} shows that $\rho-\rho_{\infty}$ and $e^{\mu}-e^{\mu_{\infty}}$ have the same sign. Since  $e^{\mu}-e^{\mu_{\infty}}$ and $\mu-\mu_{\infty}$ have the same sign, the mean value theorem and \eqref{b1} provide
$$(\rho-\rho_{\infty})(e^{\mu}-e^{\mu_{\infty}})\leq e^{\mu_2}(\rho-\rho_{\infty})({\mu}-{\mu_{\infty}}).$$
The last two estimates show
$$(\rho-\rho_{\infty})({\mu}-{\mu_{\infty}})\geq \frac{(\Pi f-f_{\infty})^2 e^{2\mathcal{E}} e^{\min_{\mathbb{T}^d}\{V\}-\mu_{2}}}{(1-\Pi f)^2(1-f_{\infty})^2}\int_{B}(1-\Pi f)(1-f_{\infty})e^{-\mathcal{E}}dk.$$
Since $1-\Pi f,$  $1-f_{\infty},$ and $\int_{B}(1-\Pi f)(1-f_{\infty})e^{-\mathcal{E}}dk$ are bounded from below and above by positive constants, \eqref{rk} also holds.
\end{proof}

\begin{lemma} Let  $\mathcal{E}:B\to \mathbb{R},$  $V:\mathbb{T}^d\to \mathbb{R}$, and $\sigma :\mathbb{T}^d\times B\times B\to \mathbb{R}$ be smooth functions.  Assume there is a constant $\sigma_{-}>0$ such that
\begin{equation*}
   \sigma_{-} \leq \sigma(x,k,k'), \, \, \forall\, x\in \mathbb{T}^d,\,\,\, \forall\, k,k'\in B.
\end{equation*}
 Then, for functions $f$ satisfying \eqref{b1}, there exists a constant  $C_5>0$  such that
\begin{align}\label{C_5}
\mathcal{D}(f)\geq C_5 \int_{\mathbb{T}^d}\int_{{B}}(f-\Pi f)^2 e^{\mathcal{E}}dkdx.
\end{align}
\end{lemma}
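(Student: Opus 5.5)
We plan to use three ingredients in turn: the elementary inequality $(a-b)(\log a-\log b)\geq \frac{(a-b)^2}{\max(a,b)}$, the lower bound $\sigma\geq\sigma_-$, and the uniform bounds \eqref{b1}, which make all prefactors comparable to constants. By \eqref{b1}, $F=\frac{f}{(1-f)e^{-\mathcal{E}-V}}=e^{\mu_f}$ with $\mu_f(t,x,k)\colonequals\log F\in[\mu_1,\mu_2]$. Hence $F,F'\in[e^{\mu_1},e^{\mu_2}]$, and therefore
$$(F-F')(\log F-\log F')\geq e^{-\mu_2}(F-F')^2.$$
Also $1-f$ and $1-f'$ are bounded below by a positive constant $c_0$, as in the proof of \eqref{rr}. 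Combining these with $\sigma\geq\sigma_-$ and $e^{-V}\geq e^{-\max V}$, we get
$$\mathcal{D}(f)\geq c\int_{\mathbb{T}^d}\int_B\int_B e^{-\mathcal{E}-\mathcal{E}'}(F-F')^2\,dk'dkdx .$$

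Next we relate $F-F'$ to $f-\Pi f$. Write $f=\frac{1}{1+e^{\mathcal{E}+V-\mu_f(k)}}$ and $\Pi f=\frac{1}{1+e^{\mathcal{E}+V-\mu}}$, where $\mu(t,x)$ is independent of $k$. The map $\nu\mapsto \frac{1}{1+e^{\mathcal{E}+V-\nu}}$ has derivative $g(1-g)$, where $g$ is the value of the map. For $\nu\in[\mu_1,\mu_2]$ this derivative is bounded above by $C e^{-\mathcal{E}}$: indeed $g\leq e^{-\mathcal{E}-V+\nu}$ and $1-g\leq 1$. Note that $\mu\in[\mu_1,\mu_2]$ as well, since $\int_B\Pi f\,dk=\rho$ lies between the densities of the two bounding Fermi--Dirac functions and the map $\nu\mapsto\int_B\frac{dk}{1+e^{\mathcal{E}+V-\nu}}$ is monotone. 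The mean value theorem then gives
$$(f-\Pi f)^2 e^{\mathcal{E}}\leq C e^{-\mathcal{E}}(\mu_f-\mu)^2\leq C' e^{-\mathcal{E}}(F-e^{\mu})^2 ,$$
where the last step uses that $\log$ is bi-Lipschitz on $[e^{\mu_1},e^{\mu_2}]$.

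It remains to bound $\int_B e^{-\mathcal{E}}(F-e^{\mu})^2dk$ by $\int\int e^{-\mathcal{E}-\mathcal{E}'}(F-F')^2dk'dk$ for each fixed $x$. This is the main step, since $e^{\mu}$ is defined implicitly through mass matching rather than as an average of $F$. Set $Z=\int_B e^{-\mathcal{E}}dk$ and let $\bar F=Z^{-1}\int_B F e^{-\mathcal{E}}dk$ be the weighted average. The variance identity gives
$$\int\int e^{-\mathcal{E}-\mathcal{E}'}(F-F')^2\,dk'dk=2Z\int_B e^{-\mathcal{E}}(F-\bar F)^2dk .$$
Then
$$\int_B e^{-\mathcal{E}}(F-e^\mu)^2dk\leq 2\int_B e^{-\mathcal{E}}(F-\bar F)^2dk+2Z(\bar F-e^\mu)^2,$$
so we must control $|\bar F-e^{\mu}|$. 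The mass condition reads $\int_B\big(f-\Pi f\big)\,dk=0$. By the mean value theorem, $f-\Pi f=a(k)(\mu_f-\mu)$ with weights $a(k)$ comparable to $e^{-\mathcal{E}}$, that is, $c_1e^{-\mathcal{E}}\leq a\leq c_2 e^{-\mathcal{E}}$ (using $1-g\geq c_0$). The same comparability holds when $\mu_f-\mu$ is replaced by $F-e^{\mu}$. Hence there is a positive weight $w(k)\sim e^{-\mathcal{E}}$ with $\int_B w\,(F-e^\mu)\,dk=0$, so $e^\mu$ is the $w$-weighted mean of $F$. Consequently
$$\bar F-e^\mu=\frac{\int_B w\,(\bar F-F)\,dk}{\int_B w\,dk},$$
and Cauchy--Schwarz with $w\leq c_2e^{-\mathcal{E}}$ yields $(\bar F-e^\mu)^2\leq C\int_B e^{-\mathcal{E}}(F-\bar F)^2dk$. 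Integrating over $x$ and collecting constants gives \eqref{C_5}; every constant depends only on $\sigma_-$, $\mu_1$, $\mu_2$, $\min\mathcal{E}$, $\int_Be^{-\mathcal{E}}$, and the bounds on $V$.
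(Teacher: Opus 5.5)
Your proof is correct, but it handles the key step differently from the paper. You first strip off all non-constant weights ($\sigma$, $e^{-V}$, $(1-f)(1-f')$), which reduces $\mathcal{D}(f)$ to the plain variance of $F$ with respect to $e^{-\mathcal{E}}dk$. You then pay for the fact that $e^{\mu}$ is not the $e^{-\mathcal{E}}$-mean $\bar F$: you recognize $e^{\mu}$ as a $w$-weighted mean with $w\sim e^{-\mathcal{E}}$ and bound $|\bar F-e^{\mu}|$ via Cauchy--Schwarz.

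The paper never compares means. It keeps $(1-f)(1-f')$, inserts the factor $(1-\Pi f)(1-\Pi f')\le 1$, and uses the exact identity $F-e^{\mu}=\frac{e^{\mathcal{E}+V}(f-\Pi f)}{(1-f)(1-\Pi f)}$. After expanding $(F-F')^2=(F-e^{\mu}+e^{\mu}-F')^2$, the cross term equals $-2\int_{\mathbb{T}^d}e^{V}\big(\int_B(f-\Pi f)\,dk\big)^2dx=0$ by the definition of $\Pi f$. In other words, the paper computes the variance for exactly the weight $w=e^{-\mathcal{E}-V}(1-f)(1-\Pi f)$ whose mean is $e^{\mu}$. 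This is precisely the $w$ you obtain from your two mean value theorem steps.

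The paper's route is a single exact computation, and it yields a constant independent of $\mu_1$. Your mean value theorem detours introduce factors $e^{-\mu_1}$; these disappear if you use the exact identity for $f-\Pi f$ instead. Your route is more modular: it separates coercivity of the plain $e^{-\mathcal{E}}$-variance from the elementary fact that recentring a variance at any average with comparable weight costs only a constant.

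Incidentally, your splitting $\int_Be^{-\mathcal{E}}(F-e^{\mu})^2dk\le 2\int_Be^{-\mathcal{E}}(F-\bar F)^2dk+2Z(\bar F-e^{\mu})^2$ is in fact an exact Pythagorean identity without the factors $2$.
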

\begin{proof}
The mean value theorem implies that there exists $\theta\in [0,1]$ such that 
$$(F-F')(\log F-\log F')=\frac{(F-F')^2}{\theta F+(1-\theta)F'}.$$
By \eqref{b1} we have $ \theta F+(1-\theta)F'\leq e^{\mu_2}$ and so 
$$(F-F')(\log F-\log F') \geq e^{-\mu_2}(F-F')^2.$$
This estimate and the lower bound on $\sigma$ imply 
\begin{align}\label{sigma/2}
\mathcal{D}(f)\geq \frac{\sigma_{-}e^{-\mu_2}}{2}\int_{\mathbb{T}^d}\int_{ B}\int_{ B}e^{-\mathcal{E}-\mathcal{E}'-V}(1-f)(1-f')(F-F')^2 dk'dkdx.
\end{align}
We have
 \begin{align*}
 (F-F')^2&=(F-e^{\mu}+e^{\mu}-F')^2\\
 &=\left(\frac{f-\Pi f}{e^{-\mathcal{E}-V}(1-f)(1-\Pi f)}\right)^2+\left(\frac{f'-\Pi f'}{e^{-\mathcal{E}'-V}(1-f')(1-\Pi f')}\right)^2\\
 &\,\,\,\,-\frac{2(f-\Pi f)(f'-\Pi f')}{e^{-\mathcal{E}-\mathcal{E}'-2V}(1-f)(1-f')(1-\Pi f)(1-\Pi f')}.
 \end{align*}
 This equation let us write
\begin{align*}
&\int_{\mathbb{T}^d}\int_{ B}\int_{ B}e^{-\mathcal{E}-\mathcal{E}'-V}(1-f)(1-f')(F-F')^2 dk'dkdx\\
&\geq \int_{\mathbb{T}^d}\int_{ B}\int_{ B}e^{-\mathcal{E}-\mathcal{E}'-V}(1-f)(1-f')(1-\Pi f)(1-\Pi f')(F-F')^2 dk'dkdx\\
&= \int_{\mathbb{T}^d}\int_{ B}\int_{ B}e^{\mathcal{E}-\mathcal{E}'+V}(1-f')(1-\Pi f') \frac{(f-\Pi f)^2}{(1-f)(1-\Pi f)}dk'dkdx\\
&\,\,\,\,+\int_{\mathbb{T}^d}\int_{ B}\int_{ B}e^{-\mathcal{E}+\mathcal{E}'+V}(1-f)(1-\Pi f) \frac{(f'-\Pi f')^2}{(1-f')(1-\Pi f')}dk'dkdx\\
&\,\,\,\,-2\int_{\mathbb{T}^d}e^{V}\int_{ B}\int_{ B}(f-\Pi f)(f'-\Pi f')dk'dkdx\\
&=2\int_{\mathbb{T}^d}\left(\int_{ B}e^{-\mathcal{E}'}(1-f')(1-\Pi f')dk'\right)\left( \int_{ B} \frac{(f-\Pi f )^2}{(1-f)(1-\Pi f)}e^{\mathcal{E}+V}dk\right)dx\\
&\,\,\,\,-2\int_{\mathbb{T}^d}\left(\int_{ B}(f-\Pi f)dk\right)^2dx.
\end{align*}
According to the definition of $\Pi f,$ the last integral is zero. Moreover, because of the bound \eqref{b1} we can estimate 
$$0< \frac{e^{\min_B \{\mathcal{E}\}+\min_{\mathbb{T}^d} \{V\}-\mu_2}}{1+e^{\min_B \{\mathcal{E}\}+\min_{\mathbb{T}^d} \{V\}-\mu_2}}\leq 1-f<1.$$  The same bounds also hold for $\Pi f.$ Hence, we have
\begin{align*}
&\int_{\mathbb{T}^d}\int_{ B}\int_{ B}e^{-\mathcal{E}-\mathcal{E}'-V}(1-f)(1-f')(F-F')^2 dk'dkdx\\
&\geq  \frac{2e^{2\min_B \{\mathcal{E}\}+2\min_{\mathbb{T}^d} \{V\}-2\mu_2}}{(1+e^{\min_B \{\mathcal{E}\}+\min_{\mathbb{T}^d} \{V\}-\mu_2})^2}\left(\int_B e^{-\mathcal{E}'}dk'\right)\int_{\mathbb{T}^d} \int_{ B} (f-\Pi f)^2e^{\mathcal{E}+V}dxdk.
\end{align*}
This equation and \eqref{sigma/2} imply
\begin{align*}
\mathcal{D}(f) \geq\frac{\sigma_- e^{2\min_B \{\mathcal{E}\}+3\min_{\mathbb{T}^d} \{V\}-3\mu_2}}{(1+e^{\min_B \{\mathcal{E}\}+\min_{\mathbb{T}^d} \{V\}-\mu_2})^2}\left(\int_B e^{-\mathcal{E}'}dk'\right)\int_{\mathbb{T}^d} \int_{ B} (f-\Pi f)^2e^{\mathcal{E}}dxdk.
\end{align*}
Hence, \eqref{C_5} holds with $C_5\colonequals \frac{\sigma_{-} e^{2\min_B \{\mathcal{E}\}+3\min_{\mathbb{T}^d} \{V\}-3\mu_2}}{(1+e^{\min_B \{\mathcal{E}\}+\min_{\mathbb{T}^d} \{V\}-\mu_2})^2}\int_B e^{-\mathcal{E}(k')}dk'. $
\end{proof}

\section{Lyapunov functional}
We  recall 
$$\rho_{\infty}(x)\colonequals \int_{B}f_{\infty}(x, k)dk$$
and 
$$\rho(t,x)\colonequals \int_{B}f(t,x,k)dk.$$
We define a vector function
\begin{align*}
J(t,x) \colonequals \int_B\nabla_k\mathcal{E}(k)\log\left(\frac{f(t,x,k)}{1-f(t,x,k)}\right) e^{-\mathcal{E}(k)}dk\in \mathbb{R}^d
\end{align*}
and a constant matrix 
$$A\colonequals \int_B \frac{\partial^2\mathcal{E}(k)}{\partial k^2}e^{-\mathcal{E}(k)} dk\in \mathbb{R}^{d\times d}.$$
We consider the elliptic equation
\begin{equation*}
-\mathrm{div}_x(A\nabla_x \phi(t,x))=\rho(t,x)-\rho_{\infty},\, \, \, \int_{\mathbb{T}^d} \phi \,dx=0, \, \, \, x\in \mathbb{T}^d, \, \, t\geq 0.
\end{equation*}
Since $\rho(t,\cdot)-\rho_{\infty}$ has zero mass for every $t\geq 0$ and  $A$ is positive definite,  the classical elliptic regularity results (see, for example, \cite{Ell}) show that there exists a unique solution $\phi$ satisfying 
\begin{equation}\label{C_R}
||\phi(t,\cdot)||_{H^2(\mathbb{T}^d)}\leq C_R ||\rho(t,\cdot)-\rho_{\infty}||_{L^2(\mathbb{T}^d)},\,\,\,\,\,\forall\,t\geq 0,
\end{equation}
where the constant $C_R>0$  is independent of $t$.
 Then we define a modified entropy functional
\begin{equation*}
\mathrm{E}(f)\colonequals \mathrm{H}(f|f_{\infty})+\delta \int_{\mathbb{T}^d} \nabla_x \phi \cdot J dx,
\end{equation*}
 where $\delta>0$  will be chosen later. 
\begin{lemma}\label{lem:eq} 
Let  $\mathcal{E}:B\to \mathbb{R}$ and  $V:\mathbb{T}^d\to \mathbb{R}$ be smooth functions.
 If $\delta>0$ is  small enough, for functions $f$ satisfying \eqref{b1}, there are constants ${C}_6>0$ and $\mathcal{C}_7>0$ such that \begin{equation}\label{E equiv}
 {C}_6\int_{\mathbb{T}^d}\int_B(f-f_{\infty})^2e^{\mathcal{E}}dkdx
 \leq \mathrm{E}(f)\leq {C}_7 \int_{\mathbb{T}^d}\int_B(f-f_{\infty})e^{\mathcal{E}}dkdx.
\end{equation}
\end{lemma}
\begin{proof}
The function 
\begin{align*}
f\mapsto f\log\frac{f}{f_{\infty}}+(1-f)\log\frac{1-f}{1-f_{\infty}}
\end{align*}
and its derivative with respect to $f$  vanish at $f=f_{\infty}$. This fact and  the mean  value theorem provide
\begin{align*}
f\log\frac{f}{f_{\infty}}+(1-f)\log\frac{1-f}{1-f_{\infty}}=\frac{(f-f_{\infty})^2}{2g(1-g)},
\end{align*}
where the function $g$ can be written as  $g\colonequals \theta f+(1-\theta)f_{\infty}$  for some function $\theta(t,x,k)\in [0,1].$ In particular, $g$  satisfies \eqref{b1}. As a consequence $\frac{e^{-\mathcal{E}}}{g(1-g)}$ is  bounded from below and above by positive constants. This implies $\mathrm{H}(f|f_{\infty})$ is equivalent to 
$\int_{\mathbb{T}^d}\int_B(f-f_{\infty})^2e^{\mathcal{E}}dkdx.$

 Next, we estimate  $\left|\int_{\mathbb{T}^d} \nabla_x \phi\cdot J dx\right|$ using the H\"older inequality
$$ \left|\int_{\mathbb{T}^d} \nabla_x \phi\cdot J dx\right|\leq ||\nabla_x \phi||_{L^2(\mathbb{T}^d)}||J||_{L^2(\mathbb{T}^d)}.$$
\eqref{C_R} shows that  $||\nabla_x \phi||_{L^2(\mathbb{T}^d)}$ is bounded (up to a constant) by $||\rho-\rho_{\infty}||_{L^2(\mathbb{T}^d)}.$ Moreover, the H\"older inequality provides $$||\rho-\rho_{\infty}||_{L^2(\mathbb{T}^d)}\leq \sqrt{\int_Be^{-\mathcal{E}}dk}\sqrt{\int_{\mathbb{T}^d}\int_B(f-f_{\infty})^2e^{\mathcal{E}}dkdx}.$$  
Integrating by parts one can show  $$\displaystyle \int_{B }\nabla_k\mathcal{E}\log\frac{f_{\infty}}{1-f_{\infty}}\,e^{-\mathcal{E}}dk=-\int_{B }\nabla_k\mathcal{E}(\mu_{\infty}-\mathcal{E}-V)e^{-\mathcal{E}}dk=0.$$ This let us write  
\begin{align*}
||J(t)||_{L^2(\mathbb{T}^d)}&=\sqrt{\int_{\mathbb{T}^d}\left|\int_{B}\nabla_k\mathcal{E}\left(\log\frac{f}{1-f}-\log\frac{f_{\infty}}{1-f_{\infty}}\right)e^{-\mathcal{E}}dk \right|^2dx}.
\end{align*}
By the mean value theorem there is a function $\tilde{\theta}(t,x,k)\in [0,1]$ such that for $h=\tilde{\theta} f+(1-\tilde{\theta})f_{\infty}$
$$\log\frac{f}{1-f}-\log\frac{f_{\infty}}{1-f_{\infty}}=\frac{f-f_{\infty}}{h(1-h)}.$$
  \eqref{b1} implies $\frac{e^{-\mathcal{E}}}{h(1-h)}$ is bounded from below and above by  positive constants.  Hence, there is a positive constant $C>0$ such that 
\begin{align*}
||J(t)||_{L^2(\mathbb{T}^d)}&\leq C\sqrt{\int_{\mathbb{T}^d}\left|\int_{B}\nabla_k\mathcal{E}(f-f_{\infty})dk \right|^2dx}.
\end{align*} 
The H\"older inequality  implies
\begin{align*}
||J(t)||_{L^2(\mathbb{T}^d)}&\leq C \sqrt{\int_{\mathbb{T}^d}\left(\int_{B}|\nabla_k \mathcal{E}|^2e^{-\mathcal{E}}dk \right)\left(\int_{B}(f-f_{\infty})^2e^{\mathcal{E}}dk\right) dx}\\
&\leq C \sqrt{\int_{B}|\nabla_k \mathcal{E}|^2e^{-\mathcal{E}} dk}\sqrt{\int_{\mathbb{T}^d}\int_{B} (f-f_{\infty})^2e^{\mathcal{E}}dk dx}. 
\end{align*}
These estimates show that there is a constant $\tilde{C}>0$ such that
$$ \left|\int_{\mathbb{T}^d} \nabla_x \phi\cdot J dx\right|\leq \tilde{C} \int_{\mathbb{T}^d}\int_{B} (f-f_{\infty})^2e^{\mathcal{E}}dk dx.$$
Therefore, we have
$$\mathrm{H}(f|f_{\infty})-\delta \tilde{C}\int_{\mathbb{T}^d}\int_{B} (f-f_{\infty})^2e^{\mathcal{E}}dk dx\leq \mathrm{E}(f)\leq \mathrm{H}(f|f_{\infty})+\delta \tilde{C}\int_{\mathbb{T}^d}\int_{B} (f-f_{\infty})^2e^{\mathcal{E}}dk dx.$$
We have proved that 
$\mathrm{H}(f|f_{\infty})$ is equivalent to $\int_{\mathbb{T}^d}\int_{B} (f-f_{\infty})^2e^{\mathcal{E}}dk dx.$ Hence, if $\delta$ is small enough, the claimed estimate  \eqref{E equiv} holds.
\end{proof}
 
\bigskip

\begin{proof}[\textbf{Proof of Theorem \ref{main th}}] Theorem \ref{existence} provides the existence of a unique distributional solution satisfying  \eqref{b1}. Hence, it remains to prove \eqref{lamda}.
We consider the modified entropy functional \begin{equation*}
\mathrm{E}(f(t))\colonequals \mathrm{H}(f(t)|f_{\infty})+\delta \int_{\mathbb{T}^d} \nabla_x \phi\cdot Jdx.
\end{equation*}
The time derivative of this functional equals
\begin{align}\label{dt E}
\frac{d}{dt}\mathrm{E}(f(t))=\frac{d}{dt}\mathrm{H}(f(t)|f_{\infty})+\delta \int_{\mathbb{T}^d} \partial_t\nabla_x \phi\cdot J dx+\delta \int_{\mathbb{T}^d} \nabla_x \phi\cdot \partial_t J dx
\end{align}
We first estimate this time derivative by assuming that the solution $f$ is in $W^{1,\infty}([0,T]\times \mathbb{T}^d\times B)$ for all $T>0.$\\
\textbf{Step 1: Estimate for $\displaystyle \int_{\mathbb{T}^d} \partial_t\nabla_x \phi\cdot J dx.$ }\\
 The H\"older inequality provides
 \begin{equation}\label{Hol}
 \int_{\mathbb{T}^d} \partial_t\nabla_x \phi\cdot J dx\leq ||\partial_t\nabla_x \phi||_{L^2(\mathbb{T}^d)} ||J||_{L^2(\mathbb{T}^d)}.
  \end{equation}
First, we estimate $||\partial_t\nabla_x \phi||_{L^2(\mathbb{T}^d)}$: 
Integrating \eqref{Eq} with respect to $k$ and using $ \int_{B} Q(f)dk=0,$ we obtain
\begin{equation*}
\partial_t \rho+\mathrm{div}_x\left(\int_B\nabla_k \mathcal{E}fdk\right)=0.
\end{equation*}
This shows  $$-\mathrm{div}_x(A\partial_t \nabla_x\phi)=\partial_t \rho=-\mathrm{div}_x\left(\int_B\nabla_k \mathcal{E}fdk\right).$$
We multiply this equation by $\partial_t \phi$ and  integrate by parts
\begin{align*}
\int_{\mathbb{T}^d}(\partial_t \nabla_x \phi(t))^TA(\partial_t \nabla_x \phi(t)) dx=&-\int_{\mathbb{T}^d}\partial_t  \phi\, \mathrm{div}_x\left(\int_B\nabla_k \mathcal{E}fdk\right)dx\\=&\int_{\mathbb{T}^d}\partial_t  \nabla_x \phi \cdot \left(\int_B\nabla_k \mathcal{E}fdk\right) dx\\
\leq & ||\partial_t \nabla_x \phi(t)||_{L^2(\mathbb{T}^d)} \sqrt{\int_{\mathbb{T}^d}\left|\int_B\nabla_k \mathcal{E}fdk\right|^2dx}. 
\end{align*}
Since the matrix $A$ is positive definite, we have
$$||\partial_t \nabla_x \phi(t)||_{L^2(\mathbb{T}^d)} \leq \alpha^{-1}_0 \sqrt{\int_{\mathbb{T}^d}\left|\int_B\nabla_k \mathcal{E}fdk\right|^2dx},$$ 
where $\alpha_0>0$ is the smallest eigenvalue of $A$.
 Integrating by parts one can show 
  $$\int_B\nabla_k \mathcal{E}\Pi f dk=0.$$
   We use this equation and  the H\"older inequality 
\begin{align*}\int_{\mathbb{T}^d}\left|\int_B\nabla_k \mathcal{E}fdk\right|^2dx&=\int_{\mathbb{T}^d}\left|\int_B\nabla_k \mathcal{E}(f-\Pi f)dk\right|^2dx\\
&\leq \left(\int_B|\nabla_k \mathcal{E}|^2e^{- \mathcal{E}}dk\right)\int_{\mathbb{T}^d}\int_B(f-\Pi f)^2e^{\mathcal{E}}dkdx. 
\end{align*}
The last two estimates show
\begin{equation}\label{Hol1}
||\partial_t \nabla_x \phi(t)||_{L^2(\mathbb{T}^d)} \leq \alpha^{-1}_0\sqrt{\int_B|\nabla_k \mathcal{E}|^2e^{- \mathcal{E}}dk}\sqrt{\int_{\mathbb{T}^d}\int_B(f-\Pi f)^2e^{\mathcal{E}}dkdx}.
\end{equation}
Next, we  estimate $||J||_{L^2(\mathbb{T}^d)}:$ Integrating by parts one can show
 $$\int_{B }\nabla_k\mathcal{E}\log\frac{\Pi f}{1-\Pi f}\,e^{-\mathcal{E}}dk=-\int_{B }\nabla_k\mathcal{E}(\mu-\mathcal{E}-V){e^{-\mathcal{E}}}dk=0.$$ This let us  write  
\begin{align*}
||J(t)||_{L^2(\mathbb{T}^d)}&=\sqrt{\int_{\mathbb{T}^d}\left|\int_{B}\nabla_k\mathcal{E}\left(\log\frac{f}{1-f}-\log\frac{\Pi f}{1-\Pi f}\right){e^{-\mathcal{E}}}dk \right|^2dx}.
\end{align*}
By the mean value theorem we have 
$$\log\frac{f}{1-f}-\log\frac{\Pi f}{1-\Pi f}=\frac{f-\Pi f}{g(1-g)},$$
where the function $g$ can be written as $g=\theta f+(1-\theta)f_{\infty}$ for some function  $\theta(t,x,k)\in [0,1].$ In particular, $g$ satisfies  \eqref{b1} and so $\frac{e^{-\mathcal{E}}}{g(1-g)}$ is bounded from below and above by  positive constants.  Hence, there is a positive constant $C>0$ such that 
\begin{equation}\label{logC}
\left|\log\frac{f}{1-f}-\log\frac{\Pi f}{1-\Pi f}\right|\leq C{e^{\mathcal{E}}}|f-\Pi f|.
\end{equation}
This helps to estimate
\begin{align*}
||J(t)||_{L^2(\mathbb{T}^d)}&\leq C\sqrt{\int_{\mathbb{T}^d}\left(\int_{B}|\nabla_k\mathcal{E}||f-\Pi f|dk \right)^2dx}.
\end{align*} 
The H\"older inequality  implies
\begin{align*}
||J(t)||_{L^2(\mathbb{T}^d)}&\leq C \sqrt{\int_{\mathbb{T}^d}\left(\int_{B}|\nabla_k \mathcal{E}|^2e^{-\mathcal{E}}dk \right)\left(\int_{B}(f-\Pi f)^2e^{\mathcal{E}}dk\right) dx}\\
&\leq C \sqrt{\int_{B}|\nabla_k \mathcal{E}|^2e^{-\mathcal{E}} dk}\sqrt{\int_{\mathbb{T}^d}\int_{B} (f-\Pi f)^2e^{\mathcal{E}}dk dx}. 
\end{align*} 
 \eqref{Hol}, \eqref{Hol1}, and the above estimate yield
\begin{equation}\label{tphi j}\int_{\mathbb{T}^d} \partial_t\nabla_x \phi\cdot J dx\leq \tilde{C}\int_{\mathbb{T}^d}\int_{B} (f-\Pi f)^2e^{\mathcal{E}}dk dx
\end{equation}
with $\tilde{C}\colonequals  C\alpha^{-1}_0\int_B|\nabla_k \mathcal{E}|^2e^{- \mathcal{E}}dk.$\\

\textbf{Step 2: Estimate for $\displaystyle \int_{\mathbb{T}^d} \nabla_x \phi \cdot  \partial_t J dx.$}\\
We compute  
 \begin{align*}
\partial_t J=&\int_B  \nabla_k \mathcal{E} \frac{\partial_t f}{f(1-f)} e^{-\mathcal{E}}dk\\
=&-\int_{B} \nabla_k\mathcal{E}\left(\nabla_k\mathcal{E}\cdot \frac{\nabla_x f}{f(1-f)}-\nabla_x V\cdot \frac{\nabla_k f}{f(1-f)}\right)e^{-\mathcal{E}}dk+\int_B\nabla_k\mathcal{E} \frac{Q(f)}{f(1-f)}e^{-\mathcal{E}}dk\\
=& -\int_{B} \nabla_k\mathcal{E}\left(\nabla_k\mathcal{E}\cdot \nabla_x\log\frac{ f}{(1-f)}-\nabla_x V\cdot \nabla_k\log\frac{ f}{(1-f)}\right)e^{-\mathcal{E}}dk\\
&+\int_B\nabla_k\mathcal{E} \frac{Q(f)}{f(1-f)}e^{-\mathcal{E}}dk.
\end{align*}
It is easy to check  $$\nabla_k\mathcal{E}\cdot \nabla_x\log\frac{ \Pi f}{(1-\Pi f)}-\nabla_x V\cdot \nabla_k\log\frac{ \Pi f}{(1-\Pi f)}=\nabla_k\mathcal{E}\cdot \nabla_x \mu.$$
Using this equation we can write  
\begin{align*}
-\int_{B}& \nabla_k\mathcal{E}\left(\nabla_k\mathcal{E}\cdot \nabla_x\log\frac{ f}{(1-f)}-\nabla_x V\cdot \nabla_k\log\frac{ f}{(1-f)}\right)e^{-\mathcal{E}}dk\\=&-\int_{B} \nabla_k\mathcal{E}\left(\nabla_k\mathcal{E}\cdot \nabla_x\log\frac{ \Pi f}{(1-\Pi f)}-\nabla_x V\cdot \nabla_k\log\frac{ \Pi f}{(1-\Pi f)}\right)e^{-\mathcal{E}}dk\\
&-\int_{B} \nabla_k\mathcal{E}\left(\nabla_k\mathcal{E}\cdot \nabla_x\log\frac{ f(1-\Pi f)}{(1-f)\Pi f}-\nabla_x V\cdot \nabla_k\log\frac{ f(1-\Pi f)}{(1-f)\Pi f}\right)e^{-\mathcal{E}}dk\\
=& -\left(\int_{B} \nabla_k\mathcal{E}\otimes \nabla_k\mathcal{E} e^{-\mathcal{E}}dk\right) \nabla_x \mu-\int_{B} \nabla_k\mathcal{E}\nabla_k\mathcal{E}\cdot \nabla_x\log\frac{ f(1-\Pi f)}{(1-f)\Pi f}\,e^{-\mathcal{E}}dk\\
&-\int_B \left(\frac{\partial^2 \mathcal{E}}{\partial k^2}-\nabla_k\mathcal{E}\otimes \nabla_k\mathcal{E}\right)\nabla_x V\log\frac{ f(1-\Pi f)}{(1-f)\Pi f}\,e^{-\mathcal{E}}dk.
\end{align*}
Note that  $ \int_{B} \nabla_k\mathcal{E}\otimes \nabla_k\mathcal{E} e^{-\mathcal{E}}dk=\int_{B} \frac{\partial^2\mathcal{E}}{\partial k^2} e^{-\mathcal{E}}dk\in \mathbb{R}^{d\times d}$.  We assume that $A\colonequals \int_{B} \frac{\partial^2\mathcal{E}}{\partial k^2} e^{-\mathcal{E}}dk$ is positive definite.
The above equations let us write 
\begin{align}\label{phi.tJ}
\int_{\mathbb{T}^d} \nabla_x \phi\cdot  \partial_t J dx=&\int_{\mathbb{T}^d}\mathrm{div}_x (A\nabla_x \phi)(\mu-\mu_{\infty}) dx\nonumber\\
&+\int_{\mathbb{T}^d}\int_{B} \nabla_k^T\mathcal{E}\frac{\partial^2\phi}{\partial x^2}\nabla_k\mathcal{E}\log\frac{ f(1-\Pi f)}{(1-f)\Pi f}\,e^{-\mathcal{E}}dkdx\nonumber \\
&-\int_{\mathbb{T}^d} \int_B \nabla_x^T\phi\left(\frac{\partial^2 \mathcal{E}}{\partial k^2}-\nabla_k\mathcal{E}\otimes \nabla_k\mathcal{E}\right)\nabla_x V\log\frac{ f(1-\Pi f)}{(1-f)\Pi f}\,e^{-\mathcal{E}}dkdx\nonumber \\
&+\int_{\mathbb{T}^d}\int_B\nabla_x \phi\cdot\nabla_k\mathcal{E} \frac{Q(f)}{f(1-f)}e^{-\mathcal{E}}dkdx.
\end{align}
 We now estimate the terms of \eqref{phi.tJ}: The first term  can be estimated as 
 \begin{align}\label{first}
 \int_{\mathbb{T}^d}\mathrm{div}_x (A\nabla_x \phi)(\mu-\mu_{\infty}) dx=&-\int_{\mathbb{T}^d}(\rho-\rho_{\infty})(\mu-\mu_{\infty})dx\nonumber\\=&-\frac{1}{\int_B e^{-\mathcal{E}(k')}dk'}\int_{\mathbb{T}^d}\int_B\rho-\rho_{\infty})(\mu-\mu_{\infty})e^{-\mathcal{E}}dkdx\nonumber \\
 \leq &-C'\int_{\mathbb{T}^d}\int_B(\Pi f-f_{\infty})^2e^{\mathcal{E}}dkdx,
 \end{align}
 where the last inequality follows from \eqref{rk} and $C'\colonequals \frac{C_4}{\int_B e^{-\mathcal{E}(k')}dk'}$.

 To estimate the second term of \eqref{phi.tJ}, we use \eqref{logC} and the H\"older inequality 
 \begin{align*}
 \int_{\mathbb{T}^d}\int_{B} & \nabla_k^T\mathcal{E}\frac{\partial^2\phi}{\partial x^2}\nabla_k\mathcal{E}\log\frac{ f(1-\Pi f)}{(1-f)\Pi f}\,e^{-\mathcal{E}}dkdx\\
 & \leq C \int_{\mathbb{T}^d}\left|\left|\frac{\partial^2\phi}{\partial x^2}\right|\right|_F \left(\int_B |\nabla_k \mathcal{E}|^2   |f-\Pi f|dk\right)dx
 \\
 & \leq C \left|\left|\frac{\partial^2 \phi}{\partial x^2}\right|\right|_{L^2(\mathbb{T}^d)}\sqrt{\int_{B}|\nabla_k \mathcal{E}|^4 e^{-\mathcal{E}}dk}
 \sqrt{ \int_{\mathbb{T}^d}\int_{B} (f-\Pi f)^2e^{\mathcal{E}} dkdx}.
 \end{align*}
 \eqref{C_R} shows that  $\left|\left|\frac{\partial^2 \phi}{\partial x^2}\right|\right|_{L^2(\mathbb{T}^d)}$ is bounded (up to a constant) by $||\rho-\rho_{\infty}||_{L^2(\mathbb{R}^d)}$. Moreover, \eqref{rr} implies that   $||\rho-\rho_{\infty}||_{L^2(\mathbb{T}^d)}$ is bounded (up to a constant) by $\sqrt{ \int_{\mathbb{T}^d}\int_{B} (\Pi f-f_{\infty})^2e^{\mathcal{E}} dkdx}.$ Hence, there is a constant ${C}''>0$ such that 
 \begin{align}\label{second}
  \int_{\mathbb{T}^d}\int_{B}&  \nabla_k^T\mathcal{E}\frac{\partial^2\phi}{\partial x^2}\nabla_k\mathcal{E}\log\frac{ f(1-\Pi f)}{(1-f)\Pi f}\,e^{-\mathcal{E}}dkdx\nonumber \\
  &\leq {C}'' \sqrt{\int_{\mathbb{T}^d}\int_{B} (\Pi f-f_{\infty} )^2e^{\mathcal{E}} dkdx} \sqrt{\int_{\mathbb{T}^d}\int_{B} (f-\Pi f)^2e^{\mathcal{E}} dkdx}.
 \end{align}
To estimate the third term in \eqref{phi.tJ}, we use \eqref{logC} and   the H\"older inequality 
 \begin{align*}
 -&\int_{\mathbb{T}^d} \int_B  \nabla_x^T\phi\left(\frac{\partial^2 \mathcal{E}}{\partial k^2}-\nabla_k\mathcal{E}\otimes \nabla_k\mathcal{E}\right)\nabla_x V\log\frac{ f(1-\Pi f)}{(1-f)\Pi f}\,e^{-\mathcal{E}}dkdx\\
 &\leq C \max_{\mathbb{T}^d}\{|\nabla_x V|\}\int_{\mathbb{T}^d}|\nabla_x \phi|\left(\int_B \left|\left|\frac{\partial^2 \mathcal{E}}{\partial k^2}-\nabla_k\mathcal{E}\otimes \nabla_k\mathcal{E}\right|\right|_F|f-\Pi f|dk\right)dx\\
 &\leq C \max_{\mathbb{T}^d}\{|\nabla_x V|\}||\nabla_x \phi||_{L^2(\mathbb{T}^d)}\sqrt{\int_B \left|\left|\frac{\partial^2 \mathcal{E}}{\partial k^2}-\nabla_k\mathcal{E}\otimes \nabla_k\mathcal{E}\right|\right|_F^2e^{-\mathcal{E}}dk}\sqrt{\int_{\mathbb{T}^d}\int_B (f-\Pi f)^2e^{\mathcal{E}}dkdx}.
 \end{align*}
  \eqref{C_R} shows that  $||\nabla_x \phi||_{L^2(\mathbb{T}^d)}$ is bounded (up to a constant) by $||\rho-\rho_{\infty}||_{L^2(\mathbb{R}^d)}$. Moreover, \eqref{rr} implies that   $||\rho-\rho_{\infty}||_{L^2(\mathbb{T}^d)}$ is bounded (up to a constant) by $\sqrt{ \int_{\mathbb{T}^d}\int_{B} (\Pi f-f_{\infty})^2e^{\mathcal{E}} dkdx}.$  Hence, there is a constant $\tilde{C}'>0$ such that  
 \begin{align}\label{third}
 -&\int_{\mathbb{T}^d} \int_B  \nabla_x^T\phi\left(\frac{\partial^2 \mathcal{E}}{\partial k^2}-\nabla_k\mathcal{E}\otimes \nabla_k\mathcal{E}\right)\nabla_x V\log\frac{ f(1-\Pi f)}{(1-f)\Pi f}\,e^{-\mathcal{E}}dkdx\nonumber \\
 &\leq \tilde{C}'\sqrt{\int_{\mathbb{T}^d}\int_B (\Pi f-f_{\infty})^2e^{\mathcal{E}}dkdx} \sqrt{\int_{\mathbb{T}^d}\int_B (f-\Pi f)^2e^{\mathcal{E}}dkdx}
 \end{align}

We now estimate  the last term of \eqref{phi.tJ}:  \eqref{b1} implies $\frac{e^{-\mathcal{E}}}{f(1-f)}$ is bounded from above by a positive constant. Hence, there is a positive constant $\tilde{ C}''$ such that 
 \begin{align*}
 \int_{\mathbb{T}^d}\int_B\nabla_x \phi\cdot\nabla_k\mathcal{E} \frac{Q(f)}{f(1-f)}e^{-\mathcal{E}}dkdx  \leq \tilde{ C}'' \int_{\mathbb{T}^d}|\nabla_x \phi|\left|\int_B\nabla_k\mathcal{E}Q(f)dk\right| dx.
 \end{align*}
 We use  the H\"older inequality and \eqref{Q<}
  \begin{align*} \int_{\mathbb{T}^d}\int_B & \nabla_x \phi\cdot\nabla_k \mathcal{E} \frac{Q(f)}{f(1-f)}e^{-\mathcal{E}}dkdx \\
  &\leq \tilde{ C}''||\nabla_x \phi||_{L^2(\mathbb{T}^d)} \sqrt{\int_{B}|\nabla_k \mathcal{E}(k')|^2 e^{-\mathcal{E}(k')}dk'} 
  \sqrt{\int_{\mathbb{T}^d}\int_B |Q(f)|^2e^{\mathcal{E}(k)}dkdx}
   \\
  &\leq \tilde{ C}''\sqrt{C_1}||\nabla_x \phi||_{L^2(\mathbb{T}^d)} \sqrt{\int_{B}|\nabla_k \mathcal{E}(k')|^2 e^{-\mathcal{E}(k')}dk'} 
 \sqrt{\mathcal{D}(f)}.
 \end{align*}
 Similarly,  $||\nabla_x \phi||_{L^2(\mathbb{T}^d)}$ is bounded (up to a constant) by $||\rho-\rho_{\infty}||_{L^2(\mathbb{R}^d)}$. Moreover, \eqref{rr} implies that   $||\rho-\rho_{\infty}||_{L^2(\mathbb{T}^d)}$ is bounded (up to a constant) by $\sqrt{ \int_{\mathbb{T}^d}\int_{B} (\Pi f-f_{\infty})^2e^{\mathcal{E}} dkdx}.$  Hence, there is a constant $\bar{C}>0$ such that 
 \begin{equation}\label{tj3}
 \int_{\mathbb{T}^d}\int_B  \nabla_x \phi\cdot\nabla_k \mathcal{E} \frac{Q(f)}{f(1-f)}e^{-\mathcal{E}}dkdx \leq \bar{C} \sqrt{\int_{\mathbb{T}^d}\int_{B} (\Pi f-f_{\infty} )^2e^{\mathcal{E}} dkdx} \sqrt{\mathcal{D}(f)} .
 \end{equation}
 \eqref{phi.tJ}, \eqref{first}, \eqref{second}, \eqref{third}, and \eqref{tj3} imply 
 \begin{align}\label{phitj}\int_{\mathbb{T}^d} \nabla_x \phi\cdot \partial_t J dx \leq & -C'\int_{\mathbb{T}^d}\int_B(\Pi f-f_{\infty})^2e^{\mathcal{E}}dkdx\nonumber\\
 &+(C''+\tilde{C}')\sqrt{\int_{\mathbb{T}^d}\int_B (\Pi f-f_{\infty})^2e^{\mathcal{E}}dkdx} \sqrt{\int_{\mathbb{T}^d}\int_B (f-\Pi f)^2e^{\mathcal{E}}dkdx}\nonumber\\
 &+\bar{C} \sqrt{\int_{\mathbb{T}^d}\int_{B} (\Pi f-f_{\infty} )^2e^{\mathcal{E}} dkdx} \sqrt{\mathcal{D}(f)}.
 \end{align}\\
 \textbf{Step 3: Gr\"onwall's inequality. }\\
 We now summarize all estimates. \eqref{dt E}, \eqref{dtH}, \eqref{C_5}, \eqref{tphi j}, and \eqref{phitj} yield
 \begin{align*}
 \frac{d}{dt}\mathrm{E}(f(t))\leq & -\frac{\mathcal{D}(f)}{2}-\left(\frac{C_5}{2}-\delta \tilde{C}\right)\int_{\mathbb{T}^d}\int_{B} (f-\Pi f )^2e^{\mathcal{E}} dkdx\\
  & -\delta C'\int_{\mathbb{T}^d}\int_B(\Pi f-f_{\infty})^2e^{\mathcal{E}}dkdx\nonumber\\
 &+\delta (C''+\tilde{C}')\sqrt{\int_B (\Pi f-f_{\infty})^2e^{\mathcal{E}}dkdx} \sqrt{\int_B (f-\Pi f)^2e^{\mathcal{E}}dkdx}\nonumber\\
 &+\delta\bar{C} \sqrt{\int_{\mathbb{T}^d}\int_{B} (\Pi f-f_{\infty} )^2e^{\mathcal{E}} dkdx} \sqrt{\mathcal{D}(f)}.
 \end{align*}
 One can consider the right hand side of this equation as a quadratic form with respect to $\sqrt{\mathcal{D}(f)}, $ $\sqrt{\int_{\mathbb{T}^d}\int_{B} (\Pi f-f_{\infty} )^2e^{\mathcal{E}} dkdx}$, and  $\sqrt{\int_{\mathbb{T}^d}\int_{B} (\Pi f-f_{\infty} )^2e^{\mathcal{E}} dkdx}.$ If $\delta>0$ is small enough, then this quadratic form is negative definite, i.e., there is a constant $\bar{C}'=\bar{C}'(\delta)>0$ such that 
 \begin{align*} \frac{d}{dt}\mathrm{E}(f(t))\leq -\bar{C}'\left[\mathcal{D}(f)+\int_{\mathbb{T}^d}\int_{B} (f-\Pi f )^2e^{\mathcal{E}} dkdx+\int_{\mathbb{T}^d}\int_{B} (\Pi f-f_{\infty} )^2e^{\mathcal{E}} dkdx\right]. 
 \end{align*}
 It is easy to check that
 $$ \int_{\mathbb{T}^d}\int_{B} (f-\Pi f )^2e^{\mathcal{E}} dkdx+\int_{\mathbb{T}^d}\int_{B} (\Pi f-f_{\infty} )^2e^{\mathcal{E}} dkdx \geq \frac{1}{2}\int_{\mathbb{T}^d}\int_{B} (f-f_{\infty} )^2e^{\mathcal{E}} dkdx.$$
This equation and $\mathcal{D}\geq 0$ imply
\begin{align*} \frac{d}{dt}\mathrm{E}(f(t))\leq -\frac{\bar{C}'}{2}\int_{\mathbb{T}^d}\int_{B} (f-f_{\infty} )^2e^{\mathcal{E}} dkdx. 
 \end{align*}
  The inequality on the right hand side of \eqref{E equiv} implies
 $$\frac{d}{dt}\mathrm{E}(f(t))\leq-\frac{\bar{C}'}{2C_7}\mathrm{E}(f(t)).$$
 Then, Gr\"onwall's inequality implies 
 $$\mathrm{E}(f(t))\leq e^{-\frac{\bar{C}'t}{2C_7}}\mathrm{E}(f_0), \, \, \, \forall\, t\geq 0.$$
 This inequality and \eqref{E equiv} provide $$\int_{\mathbb{T}^d}\int_B\big(f(t,x,k)-f_{\infty}(x,k)\big)^2e^{\mathcal{E}(k)}dkdx\leq c e^{-2\lambda t}\int_{\mathbb{T}^d}\int_B\big(f_0(x,k)-f_{\infty}(x,k)\big)^2e^{\mathcal{E}(k)}dkdx$$ with $c\colonequals \sqrt{\frac{C_7}{C_6}}$ and $\lambda\colonequals \frac{\bar{C}'}{4C_7}.$

 We have obtained this estimate by assuming that $f$ is in $W^{1,\infty}([0,T]\times \mathbb{T}^d\times B)$ for all $T>0.$ This estimate still holds for the distributional solution which is obtained in Theorem \ref{existence}. To prove that one has to approximate $f_0$  with a sequence of smooth functions $f_{0,n}$,  which satisfy \eqref{b1} with some constants  $\mu_{1,n}$ and $\mu_{2,n}$ (cf.\,\cite[Section 3.1]{APT}). Here the constants  $\mu_{1,n}$ and $\mu_{2,n}$ converge to  $\mu_{1}$ and $\mu_2$ as $n\to \infty,$ respectively. Then, by \cite[Theorem 2.1]{Pop.}, the corresponding solution $f_n$ is in $W^{1,\infty}([0,T]\times \mathbb{T}^d\times B)$ for all $T>0.$ Then the above arguments show
 \begin{equation*}
 \int_{\mathbb{T}^d}\int_B\big(f_n(t,x,k)-f_{n,\infty}(x,k)\big)^2e^{\mathcal{E}(k)}dxdk\leq c_n e^{-2\lambda_n t}\int_{\mathbb{T}^d}\int_B\big(f_{0,n}(x,k)-f_{n,\infty}(x,k)\big)^2e^{\mathcal{E}(k)}dxdk,
 \end{equation*}
 where the function $f_{n,\infty}$ is the corresponding global equilibrium and the constants $c_n$ and $\lambda_n$ converge to $c$ and $\lambda$ as $n\to \infty,$ respectively.
The sequence $f_{0,n}$ can be chosen so that the right hand side of the above equation converges to $c e^{-2\lambda t}\int_{\mathbb{T}^d}\int_B\big(f_0(x,k)-f_{\infty}(x,k)\big)^2e^{\mathcal{E}(k)}dxdk$ as $n\to \infty$.  Hence, the left hand side is bounded, which implies that $f_n(t,x,k)$ converges weakly to some function $f(t,x,k)$. One can show the limit function $f$ is the solution obtained in Theorem \ref{existence} (we omit the details). Since the $L^2$-norm is semicontinues with respect to weak convergence, we obtain \eqref{lamda}.   
\end{proof}

 \textbf{Acknowledgement.}
 The author is funded by the Deutsche Forschungsgemeinschaft (DFG, German Research Foundation) under Germany’s Excellence Strategy EXC 2044/2-390685587, Mathematics Münster: Dynamics–Geometry–Structure. \\

\textbf{Data Availability.} There is no data associated with the paper.\\

\textbf{Conflict of interest.} The authors have no conflict of interest to declare.



{}

\end{document}